\documentclass[a4paper,12pt]{amsart}

\usepackage{enumerate}
\usepackage{esint}
\usepackage[latin1]{inputenc}
\usepackage{amsmath}
\usepackage{amsthm}
\usepackage{latexsym}
\usepackage{amssymb}
\usepackage{amsmath}
\usepackage{comment}
\usepackage{calc}
\usepackage{multicol}
\usepackage{slashed}
\usepackage{graphicx}
\usepackage{bbm}
\newtheorem{thm}{Theorem}[section]
\newtheorem{prop}{Proposition}[section]

\newtheorem{lem}{Lemma}[section]
\newtheorem{cor}{Corollary}[section]

\newtheorem{rem}{Remark}[section]

\newcommand{\R}{\mathbb{R}}
\numberwithin{equation}{section}

\newcommand{\vertiii}[1]{{\left\vert\kern-0.25ex\left\vert\kern-0.25ex\left\vert #1
\right\vert\kern-0.25ex\right\vert\kern-0.25ex\right\vert}}


\usepackage{xcolor}

\pagestyle{plain}

\makeatletter
\newcommand{\leqnomode}{\tagsleft@true}
\newcommand{\reqnomode}{\tagsleft@false}
\makeatother

\subjclass[2010]{Primary 46E35; Secondary 39B62}
\keywords{ Interpolation inequalities, fractional Sobolev inequality, Riesz potential, radial symmetry, compact embeddings}
\thanks{
The  authors thank N. Visciglia and L. Forcella for the reading of a preliminary version of the paper.  J.B. and V.G. were partially supported by ``Problemi stazionari e di evoluzione nelle equazioni di campo non-lineari dispersive'' of GNAMPA 2020 and  by the project PRIN  2020XB3EFL by the Italian Ministry of Universities and Research.
V.G. was partially supported by  by   the Top Global University Project, Waseda University, by the University of Pisa, Project PRA 2018 49 and by Institute of Mathematics and Informatics, Bulgarian Academy of Sciences.
}

\begin{document}
\title[Compact embeddings  for fractional super and sub harmonic functions with radial symmetry]{Compact embeddings  for fractional super and sub harmonic functions with radial symmetry}
\author{Jacopo Bellazzini}
   \address{Jacopo Bellazzini \newline Dipartimento di Matematica  \\ Universit\`a di Pisa \\ Largo B. Pontecorvo 5, 56100 Pisa, Italy}%
\author{Vladimir Georgiev}
\address{V. Georgiev
\newline Dipartimento di Matematica Universit\`a di Pisa
Largo B. Pontecorvo 5, 56100 Pisa, Italy\\
 and \\
 Faculty of Science and Engineering \\ Waseda University \\
 3-4-1, Okubo, Shinjuku-ku, Tokyo 169-8555 \\
Japan and Institute of Mathematics and Informatics, Bulgarian Academy of Sciences, Acad.
Georgi Bonchev Str., Block 8, 1113 Sofia, Bulgaria}%
\maketitle
\begin{abstract}
We prove compactness of the embeddings in Sobolev spaces for fractional super and sub harmonic functions with radial symmetry.  The main tool is a  pointwise decay for
 radially symmetric functions belonging to a function space defined by finite homogeneous Sobolev norm together with finite $L^2$ norm of the Riesz potentials.
As a byproduct we prove also existence of maximizers for the interpolation inequalities in  Sobolev spaces for radially symmetric  fractional super and sub harmonic functions.
\end{abstract}

\section{Introduction}

The classical embedding in Sobolev spaces  $H^{S}(\R^d)\subset  \dot H^{r}(\R^d)$ for $0\leq r\leq S$ follows from
the  interpolation inequality  in homogeneous Sobolev spaces
\begin{equation}\label{GagliNiren}
\|D^r \varphi\|_{L^p(\R^d)}\leq C(r,S,p, d) \, \|\varphi\|_{L^2(\R^d)}^{1-\theta} \, \|D^{S} \varphi\|_{L^2(\R^d)}^{\theta}, \,,
\end{equation}
where  $\varphi\in H^{S}(\R^d)$ and $D^s \varphi$ is defined  by
$$
  (\widehat {D^s  \varphi}) (\xi)
  = |\xi|^s \widehat{\varphi} (\xi).
$$

The inequality \eqref{GagliNiren} holds, see  \cite{BM01}, Corollary 1.5 in \cite{HMOW11}, \cite{BM19} or  Theorem 2.44 in \cite{BCD} provided that
\begin{itemize}
\item $\frac{1}{p}=\frac{1}{2}+ \frac{r-\theta S}{d},$
\item $\frac{r}{S}\leq \theta\leq 1$,
\item $0<r\leq S$, $p>1$.
\end{itemize}

We notice that at the endpoint case $p=2$, corresponding to $\theta=\frac{r}{S}$, we have
\begin{equation}\label{GagliNirenq4}
\|D^r \varphi\|_{L^2(\R^d)}\leq C(r,S,2, d)\|\varphi\|_{L^2(\R^d)}^{1-\frac{r}{S}} \, \|D^{S} \varphi\|_{L^2(\R^d)}^{\frac{r}{S}},
\qquad \forall \varphi\in  H^{S}(\R^d),
\end{equation}
and hence the embedding  $H^{S}\subset  \dot H^{r}$ for $0\leq r\leq S$ is just a  consequence of \eqref{GagliNirenq4}.
If we look at the endpoint cases $\theta=\frac{r}{S}$ and $\theta=1$ in \eqref{GagliNiren} we obtain  that the range of  exponents $p$ without any symmetry and positivity assumption fulfills
\begin{equation}\label{eq:rangep}
\begin{aligned}
& p\in[2,\frac{2d}{d-2(S-r)}] &  \text{if } S-r<\frac{d}{2},\\
& p\in[2,\infty)  &  \text{if } S-r\geq \frac{d}{2}.
\end{aligned}
\end{equation}
We  remark that the lower endpoint does not depend on dimension $d$.

Moreover, looking at \eqref{GagliNirenq4}, it is easy to prove that the best constant in \eqref{GagliNirenq4} is $C(r,S,2, d)=1$. Indeed from H\"older's inequality in frequency applied to l.h.s. of \eqref{GagliNirenq4} we get $C(r,S,2, d)\leq 1$ and calling $A_n=\left\{\xi \in \R^d \text{ s.t. }1-\frac{1}{n}<|\xi|<1+\frac{1}{n}\right\}$ it suffices to consider
a sequence $\varphi_n$ such that $\hat \varphi_n(\xi)=\mathbbm{1}_{A_n}(\xi)$ to prove that $C(r,S,2, d)=1$.

 In the sequel we consider $ r,S,d$ as fixed quantities and we aim to study the range of $p$ such that  \eqref{GagliNiren} holds in case we restrict to  \emph{radially symmetric} functions $\varphi$ in $H^{S}(\R^d)$ such that $D^r \varphi$ is not only radially symmetric but also either \emph{positive} or \emph{negative}.

We introduce the notation for $0<r<s$
\begin{equation}
\dot H^{s}_{rad}(\R^d):=\{ \varphi \in \dot H^s(\R^d), \ \   \varphi=\varphi(|x|) \},
\end{equation}
\begin{equation}
H^{s}_{rad}(\R^d):=\{ \varphi \in H^s(\R^d), \ \   \varphi=\varphi(|x|) \},
\end{equation}
\begin{equation}
H^{s,r}_{rad, +}(\R^d):=\{ \varphi \in H^s_{rad}(\R^d), \ \  \ D^r \varphi \geq 0 \},
\end{equation}
\begin{equation}
H^{s,r}_{rad, -}(\R^d):=\{ \varphi \in H^s_{rad}(\R^d), \ \  \ D^r \varphi \leq 0 \}.
\end{equation}
By the relation $  (\widehat {-\Delta \varphi}) (\xi)
  = 4\pi^2 |\xi|^2 \widehat{\varphi} (\xi)= 4\pi^2(\widehat {D^2  \varphi}) (\xi)$ we shall emphasize that $H^{s,2}_{rad, +}(\R^d)$ corresponds to the set of \emph{superharmonic} radially symmetric functions belonging to $H^s(\R^d)$ while
$H^{s,2}_{rad, -}(\R^d)$ corresponds to the set of \emph{subharmonic} radially symmetric functions belonging to $H^s(\R^d)$. In the sequel we will call when $r\neq 2$
\emph{fractional superharmonic} radially symmetric functions belonging to $H^s(\R^d)$ the functions belonging to $H^{s,r}_{rad, +}(\R^d)$ and \emph{fractional subharmonic} radially symmetric functions belonging to $H^s(\R^d)$ the functions belonging to $H^{s,r}_{rad, -}(\R^d)$.

The main questions we are interesting in are the following ones:

Question A: Can we find appropriate values of $(r,S)$ such that $p$ can be chosen below $2$ in \eqref{GagliNiren} for fractional superharmonic (resp. subharmonic) functions belonging to $H^{S,r}_{rad, +}(\R^d)$?

\vspace{0.4cm}

Question B: If the answer of question A is positive, then can we expect a compact embedding of type
\begin{equation}\label{eq.cce1}
    H^{S,r}_{rad, +}(\R^d) \subset \subset \dot H^r(\R^d)?
\end{equation}

In the sequel we will consider the case  $\varphi \in H^{S,r}_{rad, +}(\R^d)$ but all the results are still valid if we consider $\varphi \in H^{S,r}_{rad, -}(\R^d)$.
The first result of the paper gives a positive answer to Question A.
\begin{thm}\label{thm:super}
Let $d\geq 2$ and $\frac 12 <r<\min(\frac{d}{2}, S-\frac 12)$,  then

\begin{equation}\label{GagliNiren5}
\begin{aligned}
&\|D^r \varphi\|_{L^p(\R^d)}\leq C_{rad,+}(r,S,p, d) \, \|\varphi\|_{L^2(\R^d)}^{1-\theta} \, \|D^{S} \varphi\|_{L^2(\R^d)}^{\theta}, \\
&  \forall \varphi  \ \in H^{S,r}_{rad,+}(\R^d) \,,
\end{aligned}
\end{equation}
with
\begin{align}
& p\in(p_0,\frac{2d}{d-2(S-r)}] & & \text{if } S-r<\frac{d}{2},\\
& p\in(p_0,\infty)  & & \text{if } S-r\geq \frac{d}{2},
 \end{align}
with $\theta$ fixed by the scaling equation
$$\frac{1}{p}=\frac{1}{2}+ \frac{r-\theta S}{d},$$ and $p_0<2$ is given by
$$p_{0}=\frac{d-2r+2(S-r)(d-1)}{-((S -r)-\frac 12)(d-2r)+2(S-r)(d-1)}.$$
\end{thm}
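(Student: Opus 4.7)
\emph{Plan.} The argument has two ingredients: (i) a pointwise decay for $D^r\varphi$ that combines the Riesz potential representation with the sign condition $D^r\varphi\geq 0$, and (ii) a decomposition of $\|D^r\varphi\|_{L^p}^p$ that converts the pointwise decay into the interpolation inequality for $p<2$.

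Set $f:=D^r\varphi\geq 0$, so that $\varphi = c_{d,r}\, I_r f$ with $I_r$ the Riesz potential of order $r$. Since $f$ is nonnegative and radial and the Riesz kernel dominates $c\,|x|^{-(d-r)}\mathbbm{1}_{\{|y|\leq |x|/2\}}$, one obtains
\[
 \varphi(x)\;\geq\;c\,|x|^{-(d-r)}\int_{|y|\leq |x|/2} f(y)\,dy.
\]
Coupled with the classical remark that a radial $L^2$ function attains, at each scale $R$, some value $|\varphi(s_R)|\leq C R^{-d/2}\|\varphi\|_{L^2}$ for an $s_R\in[R,2R]$, this yields the local mass control $\int_{|y|\leq R} f(y)\,dy \leq C R^{d/2-r}\|\varphi\|_{L^2}$. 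The hypothesis $r<S-1/2$ gives $S-r>1/2$, so the radial Strauss--Ni estimate applied to $f\in \dot H^{S-r}_{rad}$ supplies the complementary pointwise bound $\|f\|_{L^\infty(\{|x|\sim R\})}\leq C R^{-(d/2-(S-r))}\|D^S\varphi\|_{L^2}$. Interpolating these two pieces of information produces a pointwise decay for $D^r\varphi$ sharper than either bound alone, of the form $|D^r\varphi(x)|\leq C|x|^{-\alpha}\|\varphi\|_{L^2}^{1-\gamma}\|D^S\varphi\|_{L^2}^{\gamma}$, with $\alpha$ and $\gamma$ tied by scale invariance.

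Next I would split $\|D^r\varphi\|_{L^p}^p$ at a radius $R$: the inner piece is controlled by H\"older's inequality (which is where $p<2$ matters) together with \eqref{GagliNirenq4}, giving a bound of order $R^{d(1-p/2)}\|\varphi\|_{L^2}^{p(1-r/S)}\|D^S\varphi\|_{L^2}^{pr/S}$; the outer piece is decomposed into dyadic annuli $A_k=\{2^{k-1}\leq |x|<2^k\}$, on each of which the mass control and the Strauss--Ni bound are combined via the log-convexity $\|f\|_{L^p(A_k)}^p\leq \|f\|_{L^1(A_k)}\,\|f\|_{L^\infty(A_k)}^{p-1}$. The resulting sum over $k$ is a geometric series in $2^k$ whose ratio is negative precisely when $p>p_0$, and the explicit formula for $p_0$ in the statement is read off from the arithmetic of the two exponents. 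A final optimization in $R$ and a rewriting of the outcome through the scaling identity $\frac{1}{p}=\frac{1}{2}+\frac{r-\theta S}{d}$ deliver the claimed inequality with the unique admissible $\theta$.

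The main technical obstacle is the interpolation step: the local $L^1$ mass alone would only give $p>2d/(d+2r)$, while the Strauss--Ni pointwise bound alone would only give $p>2d/(d-2(S-r))\geq 2$; the precise threshold $p_0$ arises from the sharp combination of the two. It is not a coincidence that $p_0<2$ is equivalent to $(2r-1)(S-r)>0$, and this identity explains why the assumptions $r>1/2$ and $r<S-1/2$ are exactly those needed for a non-trivial statement: the first so that $p_0<2$, the second so that the Strauss--Ni estimate used in the pointwise decay is available. A secondary book-keeping difficulty is aligning the powers of $\|\varphi\|_{L^2}$ and $\|D^S\varphi\|_{L^2}$ coming out of the optimization with the scaling-prescribed $\theta$.
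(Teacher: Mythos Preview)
Your plan has a genuine gap at the step claiming a pointwise decay ``sharper than either bound alone'' by ``interpolating'' the local $L^1$ mass control with the Strauss--Ni $L^\infty$ bound. An integral bound on balls and a pointwise bound cannot be combined to improve the pointwise bound; the only output of your dyadic log-convexity $\|f\|_{L^p(A_k)}^p \leq \|f\|_{L^1(A_k)}\|f\|_{L^\infty(A_k)}^{p-1}$ is the convergence condition
\[
p \;>\; 1 + \frac{d/2-r}{d/2-(S-r)},
\]
which is \emph{not} the $p_0$ of the statement. For $(d,r,S)=(5,1,3)$ this threshold equals $4$, so nothing below $p=2$ is obtained, while the theorem gives $p_0 = 38/23 < 2$; for $(d,r,S)=(5,2,3)$ the threshold is $4/3$ versus the theorem's $6/5$. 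The equivalence ``$p_0<2 \Longleftrightarrow (2r-1)(S-r)>0$'' that you quote holds for the paper's $p_0$ but fails for your expression, which is below $2$ only under the extra constraint $S<2r$. (A related issue: the plain Cho--Ozawa decay $|f(x)|\lesssim |x|^{-(d/2-(S-r))}$ you invoke requires $S-r<d/2$, so the range $S-r\geq d/2$ of the theorem is not covered at all by your scheme.)

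The missing mechanism is a frequency decomposition. The paper writes $u=D^r\varphi = \psi_R * u + h$ with a smooth Fourier cutoff at scale $\sim 1/R$ and estimates the pieces separately. The high-frequency part obeys $|h(x)| \lesssim R^{s-1/2}|x|^{-(d-1)/2}\|D^S\varphi\|_{L^2}$ with $s=S-r$ (Cho--Ozawa with cutoff; note the spatial exponent $(d-1)/2$ is \emph{strictly larger} than the $d/2-(S-r)$ of the uncut Strauss--Ni bound precisely because $S-r>1/2$, at the price of the diverging prefactor $R^{s-1/2}$). The low-frequency part is controlled by a weighted-$L^1$ estimate that ultimately rests on the sign condition $D^r\varphi\ge0$ --- this is indeed where your Riesz-kernel/mass idea enters --- and decays like $R^{-1}|x|^{-(d-1+b)}$ for a suitable $b<0$. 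Optimising in $R$ then produces a pointwise exponent $\sigma$ genuinely larger than $d/2-(S-r)$; feeding this into $\int_{|x|>1}|u|\,|u|^{p-1}\,dx\lesssim \int_{|x|>1}|u|\,|x|^{-\sigma(p-1)}\,dx$ and invoking the weighted-$L^1$ bound once more delivers exactly the stated $p_0$. Your two building blocks are the right ones, but they must be applied to the frequency-localised pieces, not to $u$ globally.
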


\begin{rem}
Theorem \ref{thm:super} holds also for  $\varphi  \ \in H^{S,r}_{rad,-}(\R^d)$. The crucial condition is that $D^r \varphi$ does not change sign.
\end{rem}

The constant  $C_{rad,+}(r,S,p, d)$ in \eqref{GagliNiren5} is defined as best constant in case of functions belonging to  $ H^{S,r}_{rad,+}(\R^d)$.

The fact that $p_0<2$ in the above Theorem implies  $D^r \varphi \in L^{p}$ with $p \in (p_0,2)$  and this allows us to obtain also a positive answer to Question B.
\begin{thm}\label{thm:main2}
Let $d\geq 2$ and $\frac 12 <r_0<\min(\frac{d}{2}, S-\frac 12)$, then the embedding
$$H^{S,r_0}_{rad,+}(\R^d)\subset \subset \dot H^{r}_{rad}(\R^d)
$$
is compact for any $0<r<S.$
\end{thm}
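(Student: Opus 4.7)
My plan proceeds in three stages: extract a weak limit, reduce by interpolation to a single compactness statement in $\dot H^{r_0}$, and close that statement via the paper's pointwise radial decay tool combined with local Rellich compactness. So let $\{\varphi_n\}$ be a bounded sequence in $H^{S,r_0}_{rad,+}(\R^d)$. Passing to a subsequence, $\varphi_n \wto \varphi$ weakly in $H^S$ by Banach--Alaoglu. The radial subspace is closed, and since $r_0 < S$ the operator $D^{r_0}: H^S \to L^2$ is continuous, so $D^{r_0}\varphi_n \wto D^{r_0}\varphi$ weakly in $L^2$; testing against nonnegative $\psi \in C_c^\infty(\R^d)$ transfers the sign condition, so $\varphi \in H^{S,r_0}_{rad,+}(\R^d)$.

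I then reduce everything to showing $\|D^{r_0}(\varphi_n - \varphi)\|_{L^2} \to 0$. The Plancherel--H\"older interpolation
\begin{equation*}
\|D^r g\|_{L^2} \leq \|D^a g\|_{L^2}^{\theta}\,\|D^b g\|_{L^2}^{1-\theta}, \qquad r = \theta a + (1-\theta)b, \quad 0 \leq a < b,
\end{equation*}
applied with $(a,b)=(0,r_0)$ for $0 < r \leq r_0$ and with $(a,b)=(r_0,S)$ for $r_0 \leq r < S$, combined with the uniform control of $\|\varphi_n - \varphi\|_{L^2}$ and $\|D^S(\varphi_n-\varphi)\|_{L^2}$, turns compactness at the single level $r_0$ into compactness at every $r \in (0,S)$.

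For the core compactness, set $f_n := D^{r_0}\varphi_n \geq 0$ and $f := D^{r_0}\varphi$. The Fourier identity $\|f_n\|_{H^{S-r_0}} \leq \|\varphi_n\|_{H^S}$ together with the local Rellich--Kondrachov embedding $H^{S-r_0}(B_R) \subset\subset L^2(B_R)$ gives, by a diagonal extraction, strong convergence $f_n \to f$ in $L^2(B_R)$ for every $R > 0$. For the tails, Theorem \ref{thm:super} ensures uniform boundedness of $f_n$ in $L^p$ for some $p < 2$, and feeding this into the pointwise decay estimate for nonnegative radial functions (the main technical tool of the paper) produces a uniform pointwise bound $|f_n(x)| \leq C|x|^{-\alpha}$ for $|x| \geq 1$ with $\alpha > d/2$. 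Hence
\begin{equation*}
\int_{|x|\geq R}|f_n - f|^2\, dx \leq 2\int_{|x|\geq R}\bigl(|f_n|^2 + |f|^2\bigr)\, dx \leq C\int_R^\infty \rho^{d-1-2\alpha}\, d\rho \longrightarrow 0,
\end{equation*}
uniformly in $n$ as $R \to \infty$, and combining with the local convergence yields $f_n \to f$ in $L^2(\R^d)$.

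The hard part will be the uniform tail bound: the classical Strauss radial estimate $|f_n(x)| \lesssim |x|^{-(d-1)/2}$ is exactly one notch short of the square-integrability threshold $|x|^{-d/2}$. It is precisely the sign condition $D^{r_0}\varphi_n \geq 0$, converted by Theorem \ref{thm:super} into sub-$L^2$ integrability, that upgrades the pointwise decay past $d/2$ through the paper's radial decay tool; the rest of the argument is bookkeeping.
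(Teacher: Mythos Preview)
Your reduction by interpolation and your treatment of the interior via Rellich--Kondrachov for $f_n=D^{r_0}\varphi_n\in H^{S-r_0}(\R^d)$ restricted to $B_R$ are correct, and in fact your interior argument is cleaner than the paper's commutator computation.

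The genuine gap is in the tail estimate. You assert that the paper's pointwise decay tool (Proposition~\ref{prop:tutto}) yields $|f_n(x)|\le C|x|^{-\sigma}$ with $\sigma>d/2$. That proposition, applied with $u=D^{r_0}\varphi_n$, $s=S-r_0$, and Riesz exponent $\alpha=d-r_0$, produces a $\sigma$ satisfying
\[
\sigma<\sigma_{\max}=\frac{2s(d-1)-(2s-1)(d/2-r_0)}{2s+1},
\]
and a direct computation shows $\sigma_{\max}>d/2$ is equivalent to $r_0>\frac{2s}{2s-1}$. This fails routinely under the theorem's hypotheses: for instance $d=2$, $r_0=0.6$, $S=2$ gives $s=1.4$ and $\sigma_{\max}\approx 0.55<1=d/2$; or in the superharmonic case $d=5$, $r_0=2$, $S=2.6$ gives $\sigma_{\max}\approx 2.14<2.5=d/2$. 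Also note that Proposition~\ref{prop:tutto} takes as input the Riesz and $\dot H^s$ norms, not an $L^p$ norm for $p<2$, so ``feeding the sub-$L^2$ bound into the decay tool'' is not how that proposition operates.

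The paper's route avoids needing $\sigma>d/2$ altogether. For the exterior it uses only the ordinary Strauss-type decay $|D^{r_0}(\varphi_n-\varphi)(x)|\lesssim |x|^{-\gamma}$ with $\gamma=(d-1)/2-\delta$ (strictly below $d/2$, and valid without any sign condition), and then exploits the $L^{2-\delta_0}$ bound from Theorem~\ref{thm:super} through the H\"older splitting
\[
\int_{|x|>\rho}|g|^2\,dx=\int_{|x|>\rho}|g|^{\delta_0}|g|^{2-\delta_0}\,dx
\le C\rho^{-\gamma\delta_0}\|g\|_{L^{2-\delta_0}}^{2-\delta_0}.
\]
So the sub-$L^2$ integrability is indeed the decisive input, but it enters the tail estimate directly via this splitting, not by upgrading the pointwise exponent past $d/2$.
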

\begin{rem}
Theorem \ref{thm:main2} holds also  in $H^{S,r_0}_{rad,-}(\R^d)$. Clearly the main difficult in Theorem \ref{thm:main2} is to prove that the embedding $H^{S,r_0}_{rad,+}(\R^d)\subset \subset \dot H^{r_0}_{rad}(\R^d)$ is compact, the compactness for $r\neq r_0$
will follow by interpolation.
\end{rem}

As a second byproduct we have also the following result concerning the existence of maximizers for the interpolation inequality \eqref{GagliNiren5} in case $p=2$.
\begin{thm}\label{thm:main3}
Let $d\geq 2$ and $\frac 12 <r<\min(\frac{d}{2}, S-\frac 12)$  then
$$\|D^r \varphi\|_{L^2(\R^d)}\leq C_{rad,+}(r,S,2, d)\|\varphi\|_{L^2(\R^d)}^{1-\frac{r}{S}} \, \|D^{S} \varphi\|_{L^2(\R^d)}^{\frac{r}{S}}, $$
$$ \forall \varphi  \ \in H^{S,r}_{rad,+}(\R^d),$$
and the best constant $C_{rad,+}(r,S,2, d)$ is attained and  $C_{rad,+}(r,S,2, d)<1.$
\end{thm}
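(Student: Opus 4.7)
The plan is to combine the compact embedding from Theorem \ref{thm:main2} with the direct method of the calculus of variations to produce a maximizer, and then rule out $C_{rad,+}(r,S,2,d)=1$ by revisiting the Hölder-in-Fourier argument that yields the universal bound $C(r,S,2,d)=1$ recalled after \eqref{GagliNirenq4}.

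\textbf{Step 1 (Normalization and weak limit).} By the scaling $\varphi\mapsto \mu\,\varphi(\lambda\,\cdot)$ and positive homogeneity of the inequality, a maximizing sequence $\{\varphi_n\}\subset H^{S,r}_{rad,+}(\R^d)$ may be normalized so that $\|\varphi_n\|_{L^2}=\|D^S\varphi_n\|_{L^2}=1$ and $\|D^r\varphi_n\|_{L^2}\to C_{rad,+}(r,S,2,d)$. The sequence is bounded in $H^S_{rad}(\R^d)$, so along a subsequence $\varphi_n\wto \varphi$ in $H^S_{rad}(\R^d)$, and weak lower semicontinuity gives $\|\varphi\|_{L^2}\leq 1$ and $\|D^S\varphi\|_{L^2}\leq 1$.

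\textbf{Step 2 (The weak limit is a maximizer).} Applying Theorem \ref{thm:main2} with $r_0=r$, the embedding $H^{S,r}_{rad,+}(\R^d)\subset\subset \dot H^r_{rad}(\R^d)$ is compact, so $D^r\varphi_n\to D^r\varphi$ strongly in $L^2(\R^d)$ and, along a further subsequence, pointwise almost everywhere. The sign constraint $D^r\varphi_n\geq 0$ then passes to the limit, giving $D^r\varphi\geq 0$ a.e., so $\varphi\in H^{S,r}_{rad,+}(\R^d)$. The strong convergence also yields $\|D^r\varphi\|_{L^2}=C_{rad,+}(r,S,2,d)>0$, hence $\varphi\not\equiv 0$. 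Applying the inequality with best constant to $\varphi$ itself,
\[
C_{rad,+}(r,S,2,d)=\|D^r\varphi\|_{L^2}\leq C_{rad,+}(r,S,2,d)\,\|\varphi\|_{L^2}^{1-r/S}\|D^S\varphi\|_{L^2}^{r/S}\leq C_{rad,+}(r,S,2,d),
\]
forces equality throughout; together with $\varphi\not\equiv 0$ this gives $\|\varphi\|_{L^2}=\|D^S\varphi\|_{L^2}=1$, so $\varphi$ is a maximizer.

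\textbf{Step 3 (Strict inequality).} The universal bound $C(r,S,2,d)=1$ comes from Hölder in frequency,
\[
\int_{\R^d}|\xi|^{2r}|\widehat\varphi(\xi)|^2\,d\xi=\int_{\R^d}|\widehat\varphi|^{2(1-r/S)}\bigl(|\xi|^{2S}|\widehat\varphi|^2\bigr)^{r/S}\,d\xi\leq \Bigl(\int_{\R^d}|\widehat\varphi|^2\Bigr)^{1-r/S}\Bigl(\int_{\R^d}|\xi|^{2S}|\widehat\varphi|^2\Bigr)^{r/S},
\]
equality in which requires $|\widehat\varphi|^2$ and $|\xi|^{2S}|\widehat\varphi|^2$ to be proportional as non-negative measures, i.e. $|\widehat\varphi|$ supported on a single sphere $|\xi|=\rho$. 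Such a distribution cannot lie in $L^2(\R^d)$ unless it vanishes identically, which is consistent with the fact, noted after \eqref{GagliNirenq4}, that the unrestricted constant $1$ is attained only in the limit by Fourier-indicators of thin annuli. Since the maximizer produced in Step 2 is a nontrivial $L^2$ function, Hölder's equality case is excluded, so $C_{rad,+}(r,S,2,d)<1$.

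The only delicate point is preserving the sign constraint under passage to the weak limit, which is handled by extracting an a.e.~convergent subsequence from the strong $L^2$ convergence provided by Theorem \ref{thm:main2}; once this is in place, the remainder of the proof is a standard direct-method argument combined with the rigidity of Hölder's equality case.
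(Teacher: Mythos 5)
Your proof is correct, and the route for the existence part is genuinely different from (and simpler than) the paper's. The paper normalizes the maximizing sequence exactly as you do, but then establishes nontriviality of the limit by a more indirect route: it first invokes the $pqr$-lemma (Lemma~\ref{Lieb}) together with the non-endpoint bound $p_0<2$ to obtain a uniform lower bound on the measure of a superlevel set of $D^r\varphi_n$, then applies the compactness-modulo-translations lemma (Lemma~\ref{LiebIntro}) to extract a nontrivial weak limit, and only afterwards uses the compact embedding of Theorem~\ref{thm:main2} to upgrade to strong convergence in $\dot H^r$. You bypass the $pqr$ and translation machinery entirely: once Theorem~\ref{thm:main2} gives $D^r\varphi_n\to D^r\varphi$ strongly in $L^2$, nontriviality of $\varphi$ is automatic because $\|D^r\varphi\|_{L^2}=\lim_n\|D^r\varphi_n\|_{L^2}=C_{rad,+}(r,S,2,d)>0$, and stability of the constraint $D^r\varphi\geq 0$ follows from a.e.\ convergence (or, equivalently, from the weak closedness of the convex cone $H^{S,r}_{rad,+}$). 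This is arguably the more natural argument in the radial setting, since translations break radial symmetry and the vanishing analysis that Lemmas~\ref{Lieb}--\ref{LiebIntro} are designed to handle is precisely what the compact radial embedding already rules out; the paper's machinery is inherited from the general (non-radial) framework of~\cite{BFV}, where no such compact embedding is available. Your Step~3 (ruling out $C_{rad,+}=1$ via the equality case of Hölder in frequency, which would force $\widehat\varphi$ to be supported on a sphere and hence vanish in $L^2$) is the same as the paper's closing argument. One small remark: your observation that $a^{1-r/S}b^{r/S}=1$ with $a,b\le 1$ forces $a=b=1$ is worth stating explicitly, as it is the pivot that upgrades ``$\varphi$ satisfies the inequality with constant $C_{rad,+}$'' to ``$\varphi$ is a normalized maximizer''; you use it implicitly in the chain of inequalities in Step~2.
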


The strategy to prove  Theorem \ref{thm:super} and as a byproduct,  the compactness result given in Theorem \ref{thm:main2}, it to rewrite  \eqref{GagliNiren} involving $L^2$ norms of Riesz potentials when $0<r<d$.  By defining $u=D^r \varphi$ we obtain
\begin{equation}\label{GagliNiren2}
\| u\|_{L^p(\R^d)}\leq C(\alpha,s,p, d) \, \|\frac{1}{|x|^{\alpha}}\star u\|_{L^2(\R^d)}^{1-\theta} \, \|D^s u\|_{L^2(\R^d)}^{\theta}
\end{equation}
where $\alpha=d-r$, $s=S-r$. With respect to the new variables $\alpha, s$ we get without any symmetry or positivity assumption
\begin{equation}\label{eq:rangepm}
\begin{aligned}
& p\in[2, \frac{2d}{d-2s}] & & \text{if } s<\frac{d}{2},\\
& p\in[2, \infty)  & & \text{if }  s \geq \frac{d}{2}.
\end{aligned}
\end{equation}

If one considers functions fulfilling $D^r \varphi=u\geq 0$, inequality \eqref{GagliNiren2} is hence equivalent to the following inequality
\begin{equation}\label{GagliNiren3}
\| u\|_{L^p(\R^d)}\leq C(\alpha,s,p, d) \, \|\frac{1}{|x|^{\alpha}}\star |u| \|_{L^2(\R^d)}^{1-\theta} \, \|D^s u\|_{L^2(\R^d)}^{\theta}
\end{equation}
considering $|u|$ instead of $u$ in the Riesz potential. The strategy is hence  to prove that \emph{the radial symmetry} increases the range of $p$ for which
\eqref{GagliNiren3} holds and therefore as byproduct the  range of $p$ for which
\eqref{GagliNiren2} holds when $D^r \varphi=u$ is \emph{positive and radially symmetric} (resp. \emph{negative}). In particular we will show that the lower endpoint is allowed to be  below $p=2$.  A reasonable idea  to prove that the lower endpoint exponent  in \eqref{GagliNiren3} decreases with radial symmetry is to look at a suitable pointwise decay in the spirit of  the Strauss lemma \cite{S77} (see also   \cite{SS00,SS12} for  Besov and Lizorkin-Triebel classes). In our context where two terms are present, the Sobolev norm and  the  Riesz potential involving  $|u|$,   we have been inspired by  \cite{MVS} where the  case $s=1$ in \eqref{GagliNiren3} has been studied (see also \cite{BGO} and \cite{BGMMV}). For our purposes the fact that $s$ is in general not integer makes however the strategy completetly different from the one in \cite{MVS} and we need to estimate the decay of the high/low frequency part of the function to compute the decay. To this aim we compute the high frequency part using the explicit formula for the Fourier transform for radially symmetric function involving Bessel functions, in the spirit of \cite{CO}, while we use a  weighted $L^1$ norm to compute the decay for the low frequency part. The importance of a pointwise decay for the low frequency part involving weighted $L^p$ norms goes back to \cite{D} and we need to adapt it to our case in order to involve the Riesz potential.
Here is the step where \emph{positivity} is crucial. Indeed if one is interested to show a scaling invariant weighted inequality as
\begin{equation}\label{eq:scalinv}
\int_{\mathbb{R}^d} \frac{|u(x)|}{|x|^{\gamma}}dx \leq C \| \frac{1}{|x|^{\alpha}}\star |u| \| _{L^2(\R^d)}
\end{equation}
a scaling argument forces the exponent $\gamma$ to verify the relation $\gamma=\alpha-\frac{d}{2}$. Unfortunately \eqref{eq:scalinv} cannot hold in the whole Euclidean space following a general argument that goes back to  \cite{MVS} and \cite{R}. However a scaling invariant inequality  like \eqref{eq:scalinv} restricted on balls and on complementary of balls is enough for our purposes. Eventually, using all these tools, we are able to compute a pointwise decay that allows the lower endpoint for \eqref{GagliNiren3} to be below the threshold $p=2$.
Computed the pointwise decay we will follow the argument in \cite{BGO} to estimate the lower endpoint for  fractional superharmonic (resp. subharmonic) radially symmetric functions.

 Concerning the compactness we prove that taking a bounded sequence $\varphi_n \in H^{S,r}_{rad, +}$ then $\varphi_n \to \varphi$ $\dot{H}^r$ with $r>0$. Our strategy is to prove the smallness of $\|D^r(\varphi_n - \varphi)\|_{L^2(B_\rho)}$ and $\|D^r(\varphi_n - \varphi)\|_{L^2(B_\rho^c)}$ for suitable choice of the ball $B_\rho.$ For the first term we use Rellich-Kondrachov argument combined with commutator estimates, while for the exterior domain we use the crucial fact that $D^r(\varphi_n - \varphi)$ is in $L^p(|x|>\rho) $ for some $ p \in (1,2).$

Looking at the case $r=0$, by Rellich-Kondrachov we have $\|\varphi_n - \varphi\|_{L^2(B_\rho)} =o(1)$, however we can not obtain the smallness in the complementary $B_\rho^c$ of the ball so the requirement $r>0$ seems to be optimal.

It is interesting to look at the lower endpoint exponent $p_0$ given in Theorem \ref{thm:super} in case we consider radially symmetric superharmonic  (or subharmonic), namely when  $r=2$. In this case the condition  $\frac 12 <r<\min(\frac{d}{2}, S-\frac 12)$,  imposes to consider the case $d\geq 5$ and $S>\frac{5}{2}$. As an example we show on Figure \ref{G1} the graph of the function $p_0(S)$, that now is only a function of $S$, in lowest dimensional case $d=5$ that is a branch of hyperbola with asymptote $p_\infty=\lim_{S\rightarrow \infty} p_0(S)=8/7.$ It is interesting how the  regularity improves the  lower endpoint $p_0(S)$.

As a final comment we notice that for $d\geq 2$ if $D^2\varphi\geq 0$ then  $D^{\frac{3}{4}}\varphi=D^{-{\frac{5}{4}}}\left(D^2\ \varphi \right)\geq 0$ then,  taking $r_0=3/4$ and using the positivity of the Riesz kernel of  $D^{-{\frac{5}{4}}},$ we apply Theorem \ref{thm:main2} and we get the following corollary.
\begin{cor}
Let $\varphi_n$ be a sequence of radially symmetric superharmonic functions uniformly bounded in $H^2(\R^d)$, $d\geq 2$. Then for any $0<r<2$, up to subsequence
$\varphi_n \to \varphi$ in $\dot H^r(\R^d)$.
\end{cor}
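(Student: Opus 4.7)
The plan is to show that a radially symmetric superharmonic $\varphi\in H^{2}(\R^{d})$ automatically belongs to $H^{2,3/4}_{rad,+}(\R^{d})$, and then to invoke Theorem \ref{thm:main2} with $S=2$ and $r_{0}=3/4$.

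The key step is to verify that $D^{3/4}\varphi_{n}\ge 0$. Superharmonicity, $-\Delta\varphi_{n}\ge 0$, is equivalent to $D^{2}\varphi_{n}\ge 0$ via the identity $\widehat{-\Delta\varphi}(\xi)=4\pi^{2}|\xi|^{2}\widehat{\varphi}(\xi)$ recalled in the introduction. Factoring the multiplier as $|\xi|^{3/4}=|\xi|^{-5/4}\cdot|\xi|^{2}$ and inverting (legitimate since $d\ge 2>5/4$) yields the distributional identity
\[
D^{3/4}\varphi_{n} \;=\; c_{d}\,|x|^{5/4-d}\ast D^{2}\varphi_{n},
\]
with $c_{d}>0$ the Riesz normalization constant. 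The right-hand side is the convolution of the non-negative function $D^{2}\varphi_{n}\in L^{2}(\R^{d})$ with a strictly positive, locally integrable kernel, while the left-hand side lies in $H^{5/4}(\R^{d})\subset L^{2}(\R^{d})$; hence $D^{3/4}\varphi_{n}\ge 0$ almost everywhere, and $\varphi_{n}\in H^{2,3/4}_{rad,+}(\R^{d})$.

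The condition $\tfrac{1}{2}<r_{0}<\min(\tfrac{d}{2},S-\tfrac{1}{2})$ of Theorem \ref{thm:main2} with $S=2$ and $r_{0}=3/4$ becomes $\tfrac{1}{2}<\tfrac{3}{4}<\min(\tfrac{d}{2},\tfrac{3}{2})$, valid for every $d\ge 2$. That theorem supplies the compact embedding $H^{2,3/4}_{rad,+}(\R^{d})\subset\subset \dot H^{r}_{rad}(\R^{d})$ for every $0<r<2$. Since $\{\varphi_{n}\}$ is bounded in $H^{2}(\R^{d})$ and contained in $H^{2,3/4}_{rad,+}(\R^{d})$, compactness yields, after extracting a subsequence converging in $\dot H^{3/4}(\R^{d})$, a radial limit $\varphi$; the uniform $H^{2}$-bound together with the homogeneous Sobolev interpolation between $L^{2}$, $\dot H^{3/4}$ and $\dot H^{2}$ then upgrades this to $\varphi_{n_{k}}\to\varphi$ in $\dot H^{r}(\R^{d})$ for every $0<r<2$.

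The only mildly delicate point is the rigorous justification of the displayed identity together with the ensuing positivity. Both are handled by approximating $\varphi_{n}$ with a non-negative mollifier, which commutes with $-\Delta$ and thus preserves the superharmonicity: the identity becomes classical for smooth data, positivity of the convolution is immediate, and the conclusion for $\varphi_{n}$ follows by passing to the limit in $H^{5/4}(\R^{d})$.
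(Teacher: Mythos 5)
Your proof is correct and follows essentially the same route as the paper: write $D^{3/4}\varphi = D^{-5/4}(D^2\varphi)$, use positivity of the Riesz kernel $|x|^{5/4-d}$ to conclude $D^{3/4}\varphi \geq 0$ from superharmonicity, check that $r_0 = 3/4$, $S = 2$ satisfy $\tfrac12 < r_0 < \min(\tfrac d2, S - \tfrac12)$ for all $d \geq 2$, and invoke Theorem~\ref{thm:main2}. The only minor redundancy is the final interpolation step, since Theorem~\ref{thm:main2} already furnishes compactness into $\dot H^r_{rad}$ for every $0<r<S$, not only $r=r_0$.
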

\begin{figure}
  \includegraphics[width=10cm]{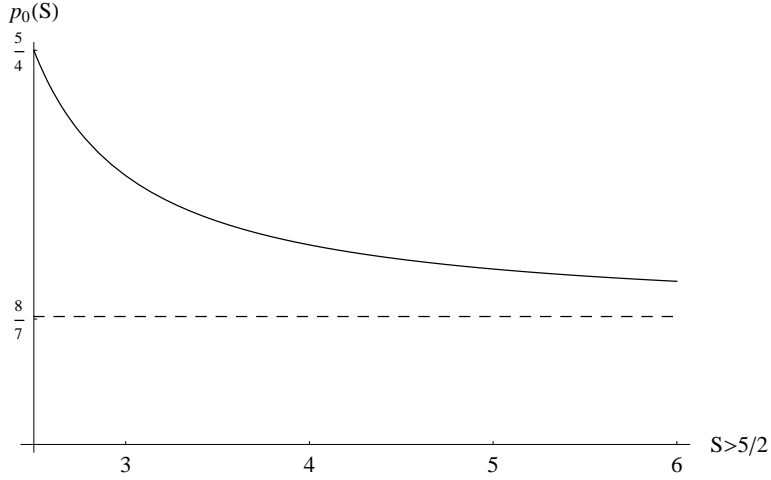}\\
  \caption{The graph of the function $p_0(S) = (16 S-30)/(14 S-27)$ in the case of superharmonic or subharmonic functions. Here $r=2,d=5$ and $S>5/2.$}\label{G1}
\end{figure}

\section{Interpolation inequalities for radial functions involving  Riesz potentials.}

Let $d\geq 2$, $0<\alpha<d$,   $\frac 12 <s,$ we define
$$X=X_{s,\alpha,d}=\left\{u \in \dot H^s_{rad }(\R^d), \ \ \left\|\frac{1}{|x|^{\alpha}}\star |u|\right\|_{L^2}<+\infty \ \right\}.$$

The aim of this section is to prove the following
\begin{thm}\label{thm:main}
Let $u\in X$ with $d\geq 2$, $s>\frac 12$, $\frac{d}{2}<\alpha<d-\frac 12$,  then $u \in L^p(\R^d)$ with
\begin{align*}
& p\in(p_{rad}, \frac{2d}{d-2s}] & & \text{if } s<\frac{d}{2},\\
& p\in(p_{rad}, \infty)  & & \text{if }  s \geq \frac{d}{2}.
\end{align*}
where $p_{rad}<2$ with
 $$p_{rad}=\frac{2(\alpha-\frac{d}{2})+2s(d-1)}{-(2s-1)(\alpha-\frac{d}{2})+2s(d-1)}.$$

\noindent Moreover, we have the scaling invariant inequality for $u\in X$
$$\| u\|_{L^p(\R^d)}\leq C(\alpha, s, p, d) \, \|\frac{1}{|x|^{\alpha}}\star |u|\|_{L^2(\R^d)}^{1-\theta} \, \|D^s u \|_{L^2(\R^d)}^{\theta},$$
with $p\in(p_{rad},\frac{2d}{d-2s}]$  if $s<\frac{d}{2}$ and  $p\in(p_{rad},\infty)$  if $s\geq \frac{d}{2}$. Here $\theta$ is fixed by the scaling invariance
$$\frac{d}{p}=(1-\theta)((d-\alpha)+\frac{d}{2})+\theta(-s+\frac{d}{2}).$$
\end{thm}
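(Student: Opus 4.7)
The plan is to derive a pointwise decay estimate for radial $u \in X$ that improves the classical Strauss-type bound by exploiting the extra Riesz-potential information, and then to convert it into the $L^p$ interpolation inequality. The decay exponent must be strong enough to push the integrability threshold strictly below $p=2$; the value $p_{rad}$ will emerge precisely from balancing two frequency contributions, each at the critical scaling.

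First I would establish a restricted weighted $L^1$ bound. Writing the convolution as a Riesz potential $\frac{1}{|x|^\alpha}\star|u| = c\,I_{d-\alpha}(|u|)$, the scaling argument recalled in the introduction forces the weight exponent $\gamma = \alpha - d/2$. Although the global inequality $\int_{\R^d}|u(x)||x|^{-\gamma}\,dx \lesssim \|I_{d-\alpha}|u|\|_{L^2}$ fails in general, localized versions on $B_R$ and on $B_R^c$ do hold with scale-consistent constants; these are obtained by duality against truncations of $|x|^{-\gamma}$, using the semigroup property of Riesz potentials to compute the $L^2$ norm of $I_{d-\alpha}$ applied to such truncations. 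This is the weighted-norm ingredient going back to \cite{D} and \cite{MVS}.

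Next I would decompose $u = P_{\leq N}u + P_{>N}u$ in frequency, with threshold $N=N(|x|)$ to be chosen later. For the high-frequency part, the radial Fourier representation
\begin{equation*}
u(x) = c_d\,|x|^{(2-d)/2}\int_0^\infty \widehat{u}(\rho)\,J_{(d-2)/2}(2\pi\rho|x|)\,\rho^{d/2}\,d\rho,
\end{equation*}
together with the Bessel asymptotic $|J_\nu(t)|\lesssim t^{-1/2}$ (in the spirit of \cite{CO}) and Cauchy--Schwarz, produces
\begin{equation*}
|P_{>N}u(x)| \lesssim |x|^{-(d-1)/2}\,N^{1/2-s}\,\|D^s u\|_{L^2(\R^d)},
\end{equation*}
valid precisely because $s>1/2$. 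For the low-frequency part, writing $P_{\leq N}u = K_N\star u$ with a kernel $K_N$ concentrated at scale $N^{-1}$ and pairing with the weight $|y|^{-\gamma}|y|^{\gamma}$, the restricted weighted $L^1$ bound applied on the appropriate annulus yields a pointwise control of the form
\begin{equation*}
|P_{\leq N}u(x)| \lesssim |x|^{-\gamma}\,\Psi(N|x|)\,\bigl\|I_{d-\alpha}|u|\bigr\|_{L^2(\R^d)},
\end{equation*}
with $\Psi$ a computable factor in $N|x|$.

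Balancing the two contributions by an appropriate choice $N=N(|x|)$ gives a pointwise decay
\begin{equation*}
|u(x)| \lesssim |x|^{-\delta}\,\bigl\|I_{d-\alpha}|u|\bigr\|_{L^2}^{1-\sigma}\,\|D^s u\|_{L^2}^{\sigma},
\end{equation*}
with $\delta$ and $\sigma$ determined by scaling and by equating the two frequency bounds; a direct computation shows $\delta\,p_{rad}=d$, which is the origin of the formula for $p_{rad}$ in the statement. The claimed $L^p$ inequality for $p \in (p_{rad},2]$ then follows as in \cite{BGO}: split $\R^d = B_R \cup B_R^c$ for a suitable $R$ depending on the two norms, estimate $\int_{B_R^c}|u|^p$ directly from the pointwise decay, and control the interior integral by interpolating between the $p=2$ case of \eqref{GagliNiren2} and the same pointwise bound. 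The range $p\in[2,\tfrac{2d}{d-2s}]$ (resp.\ $p\in[2,\infty)$ if $s\geq d/2$) is unchanged from \eqref{eq:rangepm} and requires no symmetry. The main obstacle will be Step~1: identifying the correct localized weighted inequality with scale-consistent constants and extracting from it a sharp pointwise low-frequency bound, so that the optimization in Step~3 produces exactly the exponent $p_{rad}$ announced in the theorem.
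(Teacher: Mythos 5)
Your high/low frequency decomposition, the Bessel-function bound for the high-frequency part, and the localized weighted $L^1$ estimate for the low-frequency part are all the right ingredients, and the pointwise decay they produce matches the paper's Proposition~\ref{prop:tutto}. However, the final step---passing from the pointwise bound to the $L^p$ claim for all $p>p_{rad}$---contains a genuine gap, and your diagnosis of where the difficulty lies is inverted: Step~1 is essentially a restatement of Proposition~\ref{prop:scalinv}, but Step~4 as you describe it does not reach the threshold $p_{rad}$.

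Concretely, you assert that the decay exponent $\delta$ obtained by balancing the two frequency contributions satisfies $\delta\,p_{rad}=d$. This is false. Carrying out the optimization in $N$ with the high-frequency bound $|x|^{-(d-1)/2}N^{1/2-s}$ and the low-frequency bound produced by Lemma~\ref{l.dN1} with weight $|y|^{-(\alpha-d/2+\epsilon)}$ gives, as $\epsilon\to 0$,
$$\delta=\frac{-(2s-1)(\alpha-\tfrac d2)+2s(d-1)}{2s+1},$$
and then one computes
$$\delta\,p_{rad}=\frac{2(\alpha-\tfrac d2)+2s(d-1)}{2s+1},$$
which equals $d$ only when $\alpha=d+s$---excluded under the hypothesis $\alpha<d-\tfrac12<d+s$. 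In fact $\delta\,p_{rad}<d$ always, so $\int_{|x|>1}|x|^{-\delta p}\,dx$ diverges at $p=p_{rad}$ and even for $p$ slightly above it; estimating $\int_{B_R^c}|u|^p$ ``directly from the pointwise decay'' only yields $p>d/\delta$, which is a strictly worse threshold. The point you are missing is that the weighted $L^1$ estimate must be used a second time in the conversion step: one writes
$$\int_{|x|>1}|u|^p\,dx=\int_{|x|>1}|u|\,|u|^{p-1}\,dx\lesssim\int_{|x|>1}\frac{|u|}{|x|^{\delta(p-1)}}\,dx,$$
using the pointwise decay only on $p-1$ of the factors, and then controls the remaining integral by Lemma~\ref{lem:decayy2} (equivalently, Proposition~\ref{prop:scalinv}), which requires $\delta(p-1)>\alpha-\tfrac d2$. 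This condition is exactly $p>p_{rad}$. Without this pairing of the pointwise bound with the residual $L^1$-weighted information, you cannot get below $d/\delta$, let alone down to $p_{rad}$.

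A secondary remark: your route to the localized weighted $L^1$ inequality---duality against truncations of $|x|^{-\gamma}$ using the semigroup property of Riesz potentials---is not the paper's. The paper averages over balls (Lemmas~\ref{lem:important} and~\ref{lem:important2}) and then specializes the weight function $w$; this avoids computing Riesz potentials of truncated power weights, which do not have a clean closed form. Your duality approach may be salvageable but is not obviously simpler, and you would still need to verify that it produces weights adapted to every scale $R$ with the precise power $R^{\pm\epsilon}$ dependence that Proposition~\ref{prop:scalinv} provides.
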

In order to show Theorem \ref{thm:main} we need to prove some preliminary results.

\begin{prop}\label{prop:scalinv}Let $d\geq 1$, $q>1$, $\frac{d}{q}<\alpha<d$, $\delta>0$, then there exists $C>0$ such that
\begin{equation}\label{eq:dalbass1}
\int_{B_R(0)^c} \frac{|u(x)|}{|x|^{\alpha-\frac{d}{q}+\delta}}dx \leq \frac{C}{R^{\delta}} ||\frac{1}{|x|^{\alpha}}\star |u| ||_{L^q(\R^d)}
\end{equation}
\begin{equation}\label{eq:dalbass2}
\int_{B_R(0)} \frac{|u(x)|}{|x|^{\alpha-\frac{d}{q}-\delta}}dx \leq C R^{\delta} ||\frac{1}{|x|^{\alpha}}\star |u| ||_{L^q(\R^d)}.
\end{equation}
\end{prop}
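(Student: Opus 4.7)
The plan is a dyadic decomposition on annuli coupled with a pointwise lower bound for the Riesz kernel; no symmetry or sign hypothesis on $u$ is used. For the exterior estimate \eqref{eq:dalbass1}, I would decompose $B_R(0)^c=\bigcup_{k\geq 0} A_k$ with $A_k=\{x\in\R^d:2^k R\leq |x|<2^{k+1}R\}$, so that
$$\int_{B_R(0)^c}\frac{|u(x)|}{|x|^{\alpha-\frac{d}{q}+\delta}}\,dx\leq \sum_{k\geq 0}(2^k R)^{-(\alpha-\frac{d}{q}+\delta)}\int_{A_k}|u(x)|\,dx.$$

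The key step is to control $\int_{A_k}|u|$ by $\|\frac{1}{|x|^{\alpha}}\star|u|\|_{L^q}$. For any two points $x,y\in A_k$ one has $|x-y|\leq 2^{k+2}R$, hence $|x-y|^{-\alpha}\gtrsim (2^k R)^{-\alpha}$, which yields the pointwise lower bound
$$\Bigl(\tfrac{1}{|x|^{\alpha}}\star|u|\Bigr)(y)\geq \int_{A_k}\frac{|u(x)|}{|x-y|^{\alpha}}\,dx\geq c\,(2^k R)^{-\alpha}\int_{A_k}|u(x)|\,dx\qquad\forall\,y\in A_k.$$
Raising this inequality to the $q$-th power, integrating over $A_k$ (whose Lebesgue measure is of order $(2^k R)^d$) and extracting the $q$-th root gives
$$\int_{A_k}|u(x)|\,dx\leq C\,(2^k R)^{\alpha-\frac{d}{q}}\Bigl\|\frac{1}{|x|^{\alpha}}\star|u|\Bigr\|_{L^q(\R^d)},$$
in which the hypothesis $\alpha>d/q$ just ensures the correct sign of the exponent. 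Substituting this back cancels the $\alpha-d/q$ factors and reduces the sum to the geometric series $\sum_{k\geq 0}(2^k R)^{-\delta}$, which converges (since $\delta>0$) to $R^{-\delta}$ times a finite constant; this is \eqref{eq:dalbass1}.

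For the interior estimate \eqref{eq:dalbass2} I would repeat the argument verbatim on the inward dyadic annuli $A_k=\{2^{-k-1}R\leq|x|<2^{-k}R\}$, $k\geq 0$. The same kernel lower bound gives $\int_{A_k}|u|\leq C(2^{-k}R)^{\alpha-d/q}\|\frac{1}{|x|^{\alpha}}\star|u|\|_{L^q}$, and summation produces $\sum_{k\geq 0}(2^{-k}R)^{\delta}\sim R^{\delta}$.

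I do not expect a genuine obstacle here: the only step requiring a moment's thought is the geometric observation that two arbitrary points of a dyadic shell sit at distance at most the outer radius of the shell, which is what makes the pointwise kernel bound essentially sharp at each scale; everything else is bookkeeping of the exponents and a geometric summation.
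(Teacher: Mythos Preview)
Your argument is correct. The core mechanism --- a pointwise lower bound for the Riesz potential obtained by restricting the convolution integral to a shell and exploiting that any two points of a dyadic annulus are at distance comparable to its radius --- is exactly the idea driving the paper's proof as well; in that sense the two approaches coincide.

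The packaging, however, is genuinely different and worth noting. The paper first proves a continuous maximal-type estimate (Lemma~\ref{lem:important}),
\[
\int_0^{\infty}\Bigl(\fint_{B_\rho(0)}|u|\Bigr)^q\rho^{(d-\alpha)q+d-1}\,d\rho\ \lesssim\ \Bigl\|\tfrac{1}{|x|^{\alpha}}\star|u|\Bigr\|_{L^q}^q,
\]
then a duality lemma (Lemma~\ref{lem:important2}) that handles an arbitrary radial weight $W(|x|)=\int_{|x|}^{\infty}w(\rho)\,d\rho$ via Fubini and H\"older in the $\rho$ variable, and finally specialises $w$ to a truncated power to read off \eqref{eq:dalbass1} and \eqref{eq:dalbass2}. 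Your dyadic route bypasses both lemmas: you bound $\int_{A_k}|u|$ directly from the $L^q$ norm of the potential on the same annulus and then perform a geometric $\ell^1$ summation. This is shorter and more elementary for the two specific weights at hand; the paper's detour buys a reusable statement (Lemma~\ref{lem:important2}) valid for general radial weights, and uses the integrability condition on $w$ in place of your convergent geometric series. One minor inaccuracy in your closing remark: two points of $A_k$ can be at distance up to \emph{twice} the outer radius, not the outer radius itself, but you already wrote the correct bound $2^{k+2}R$ in the body of the argument, so this is harmless.
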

The proposition for $q=2$ has been proved in  \cite{MVS}, we follow the same argument for $q>1$. In order to prove Proposition \ref{prop:scalinv} two crucial lemmas are necessary. The case $q=2$ has  been proved in \cite{MVS} and we follow the same argument.
\begin{lem}\label{lem:important}
Let $d\geq 1$, $q\geq 1$, $0<\alpha<d$, then there exists $C>0$ such that for any $a\in \R^d$
$$\int_0^{\infty} \left(\fint_{B_{\rho}(a)} |u(y)|dy\right)^q\rho^{(d-\alpha) q+d-1}d\rho\leq C||\frac{1}{|x|^{\alpha}}\star |u| ||_{L^q(\R^d)}^q.$$
\end{lem}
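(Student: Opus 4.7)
The plan rests on a single pointwise bound together with a dyadic annulus trick that makes the resulting $\rho$--integral integrable via Fubini.

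First, I would establish the elementary pointwise estimate: whenever $x \in B_\rho(a)$, one has $B_\rho(a) \subset B_{2\rho}(x)$ with $|x-y| \leq 2\rho$ on this larger ball, hence
\begin{equation*}
\int_{B_\rho(a)} |u(y)|\,dy \;\leq\; (2\rho)^\alpha \int_{B_{2\rho}(x)} \frac{|u(y)|}{|x-y|^\alpha}\,dy \;\leq\; (2\rho)^\alpha \Bigl(\tfrac{1}{|\cdot|^\alpha}\star |u|\Bigr)(x).
\end{equation*}
Raising to the $q$-th power and averaging this pointwise inequality in $x$ over the dyadic annulus $A_\rho(a):=B_\rho(a)\setminus B_{\rho/2}(a)$, whose measure is comparable to $\rho^d$, gives
\begin{equation*}
\Bigl(\fint_{B_\rho(a)}|u|\Bigr)^{q}\;\leq\; C\,\rho^{(\alpha-d)q-d}\int_{A_\rho(a)}\Bigl(\tfrac{1}{|\cdot|^\alpha}\star |u|\Bigr)(x)^{q}\,dx.
\end{equation*}

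Next, I would multiply by the weight $\rho^{(d-\alpha)q+d-1}$; the powers of $\rho$ outside the spatial integral telescope to the single factor $\rho^{-1}$. Integrating in $\rho\in(0,\infty)$ and swapping the order of integration by Fubini yields
\begin{equation*}
\int_0^\infty \Bigl(\fint_{B_\rho(a)}|u|\Bigr)^{q}\rho^{(d-\alpha)q+d-1}\,d\rho \;\leq\; C\int_{\R^d}\Bigl(\tfrac{1}{|\cdot|^\alpha}\star |u|\Bigr)(x)^{q}\!\int_{\{\rho\,:\,x\in A_\rho(a)\}}\frac{d\rho}{\rho}\,dx.
\end{equation*}
For each fixed $x$, the condition $x\in A_\rho(a)$ is equivalent to $|x-a|<\rho\leq 2|x-a|$, so the inner integral equals $\ln 2$, uniformly in $x$ and $a$. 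The desired bound follows immediately, with a constant $C$ depending only on $\alpha,q,d$ and independent of $a$.

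The only genuine obstacle is the choice of the dyadic annulus $A_\rho(a)$ rather than the full ball $B_\rho(a)$ in the averaging step: averaging over $B_\rho(a)$ would produce the divergent integral $\int_{|x-a|}^\infty d\rho/\rho$ at the Fubini stage, whereas localising to the annulus cuts the $\rho$--range to a fixed logarithmic length and thereby closes the estimate. Once this dyadic observation is made, the argument is straightforward and, crucially, uniform in the centre $a$, which is the form of the conclusion demanded by Proposition \ref{prop:scalinv}.
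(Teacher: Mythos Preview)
Your proof is correct and follows essentially the same route as the paper's: both arguments use the pointwise lower bound $\bigl(\tfrac{1}{|\cdot|^\alpha}\star |u|\bigr)(x)\gtrsim \rho^{d-\alpha}\fint_{B_\rho(a)}|u|$ for $x$ in the dyadic annulus $B_\rho(a)\setminus B_{\rho/2}(a)$, raise to the $q$-th power, integrate over the annulus, and then apply Fubini so that the $\rho$-integral collapses to $\int_{|x-a|}^{2|x-a|}\frac{d\rho}{\rho}=\ln 2$. Your explicit remark on why the annulus (rather than the full ball) is needed to avoid the divergent inner integral is exactly the key observation.
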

\begin{proof}
Let us take $x\in \mathcal{A_{\rho}}=B_{\rho}(a) \setminus B_{\frac{\rho}{2}}(a)$, then
$$\frac{1}{|x|^{\alpha}}\star |u|(x)=\int_{\R^d}\frac{|u(y)|}{|x-y|^{\alpha}}dy\geq $$
$$ \geq \int_{B_{\rho}(a) }\frac{|u(y)|}{|x-y|^{\alpha}}dy \geq C \rho^{d-\alpha}\fint_{B_{\rho}(a)} |u(y)|dy.$$
Thus we obtain for $x\in \mathcal{A_{\rho}}$
$$\left(\frac{1}{|x|^{\alpha}}\star |u|(x)\right)^q\geq C \rho^{(d-\alpha) q}\left(\fint_{B_{\rho}(a)} |u(y)|dy\right)^q$$
and hence
$$\int_{\mathcal{A_{\rho}}}\left(\frac{1}{|x|^{\alpha}}\star |u|(x)\right)^q dx \geq C \rho^{(d-\alpha) q+d}\left(\fint_{B_{\rho}(a)} |u(y)|dy\right)^q. $$
By integration we conclude that
$$\int_{0}^{\infty} \rho^{(d-\alpha) q+d-1}\left(\fint_{B_{\rho}(a)} |u(y)|dy\right)^q d\rho \leq $$ $$ \leq  C  \int_{0}^{\infty} \left(\int_{\mathcal{A_{\rho}}}\left(\frac{1}{|x|^{\alpha}}\star |u|(x)\right)^q dx \right) \frac{d \rho}{\rho}=C ||\frac{1}{|x|^{\alpha}}\star |u| ||_{L^q(\R^d)}^q.$$
\end{proof}
Let us call $W(\rho)=\int_{\rho}^{\infty}w(s)ds$  where $w:(0, \infty)\rightarrow \R$ is a measurable function
such that
\begin{equation}
\int_0^{\infty} |w(\rho)|^{\frac{q}{q-1}}\rho^{\frac{\alpha q+1-d}{q-1}}d\rho<+\infty.
\end{equation}
\begin{lem}\label{lem:important2}
Let $d\geq 1$, $q> 1$, $0<\alpha<d$, then
$$|\int_{\R^d} |u(x)| W(|x|)dx |\lesssim $$ $$ \left(\int_0^{\infty} |w(\rho)|^{\frac{q}{q-1}}\rho^{\frac{\alpha q+1-d}{q-1}}d\rho\right)^{\frac{q-1}{q}}\left(\int_0^{\infty} \left(\fint_{B_{\rho}(a)} |u(y)|dy\right)^q\rho^{\alpha q+d-1}d\rho\right)^{\frac{1}{q}},$$
and hence
\begin{equation}\label{eq:dalbass}
|\int_{\R^d} |u(x)| W(|x|)dx| \leq C ||\frac{1}{|x|^{\alpha}}\star |u| ||_{L^q(\R^d)}.
\end{equation}
\end{lem}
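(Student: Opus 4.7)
The plan is to reduce the left-hand side to a one-dimensional integral in a radial parameter $s$ by Fubini, split the resulting weight by Hölder's inequality on $(0,\infty)$, and close with Lemma \ref{lem:important}.

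The first step uses the pointwise bound $|W(|x|)|\le \int_{|x|}^\infty|w(s)|\,ds$ together with Tonelli to write
$$\Bigl|\int_{\R^d} |u(x)|\,W(|x|)\,dx\Bigr|\le \int_0^\infty |w(s)|\int_{B_s(0)} |u(x)|\,dx\,ds= \omega_d\int_0^\infty |w(s)|\,s^d \fint_{B_s(0)}|u(y)|\,dy\,ds,$$
where $\omega_d=|B_1(0)|$; the same identity carried out with $W(|x-a|)$ in place of $W(|x|)$ produces averages over $B_s(a)$ and accounts for the general center in the statement.

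The second step is the key algebraic manipulation. I would write $s^d=s^{\gamma_1}s^{\gamma_2}$ with $\gamma_1+\gamma_2=d$ and apply Hölder with conjugate exponents $q'=q/(q-1)$ and $q$; the choice $\gamma_1=\alpha-\frac{d-1}{q}$, $\gamma_2=(d-\alpha)+\frac{d-1}{q}$ produces $\gamma_1 q'=\frac{\alpha q+1-d}{q-1}$ and $\gamma_2 q=(d-\alpha)q+d-1$, giving
$$\omega_d\int_0^\infty |w|\,s^d\fint_{B_s(a)}|u|\,ds\le \omega_d\Bigl(\int_0^\infty |w|^{q'}s^{(\alpha q+1-d)/(q-1)}ds\Bigr)^{1/q'}\Bigl(\int_0^\infty \bigl(\fint_{B_s(a)}|u|\bigr)^{q}s^{(d-\alpha)q+d-1}ds\Bigr)^{1/q},$$
which matches the first displayed inequality of the lemma, with the second-factor weight aligned precisely with the one controlled by Lemma \ref{lem:important}.

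For the concluding bound \eqref{eq:dalbass}, I would apply Lemma \ref{lem:important} to the second Hölder factor, replacing it by $C\|\tfrac{1}{|x|^\alpha}\star|u|\|_{L^q(\R^d)}$, and absorb the first factor (finite by the standing integrability hypothesis on $w$) into the overall constant. The main — and essentially only — obstacle is the exponent bookkeeping in the Hölder split, rigged so that the second weight lands exactly on $\rho^{(d-\alpha)q+d-1}$; Fubini and Hölder themselves become automatic once $w$ is replaced by $|w|$ and every integrand is nonnegative.
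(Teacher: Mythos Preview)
Your proof is correct and follows essentially the same route as the paper's: Fubini to pass from the $x$-integral to a one-dimensional integral in $\rho$, then H\"older with the exponent split chosen so that the second factor carries the weight $\rho^{(d-\alpha)q+d-1}$ of Lemma~\ref{lem:important}. Your version is in fact slightly cleaner, since you bound $|W|$ by the integral of $|w|$ at the outset (so Tonelli applies without sign issues) and you carry the general center $a$ throughout.
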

\begin{proof}
We have, thanks to Fubini Theorem,
$$ \int_{\R^d} |u(x)| W(|x|)dx=\int_{\R^d} |u(x)|\left(\int_{|x|}^{\infty}w(\rho)d\rho\right) dx=$$ $$=C \int_0^{\infty}w(\rho)\rho^d \left(\fint_{B_{\rho}(0)} |u(y)|dy\right) d\rho$$
such that by H\"older's inequality we obtain
$$|\int_{\R^d} |u(x)| W(|x|)dx| =C|\int_0^{\infty}w(\rho)\rho^{d-\beta} \left(\fint_{B_{\rho}(0)} |u(y)|dy\right) \rho^{\beta}d\rho | \lesssim $$
$$ \left(\int_0^{\infty} |w(\rho)|^{\frac{q}{q-1}}\rho^{\frac{\alpha q+1-d}{q-1}}d\rho\right)^{\frac{q-1}{q}}\left(\int_0^{\infty} \left(\fint_{B_{\rho}(0)} |u(y)|dy\right)^q\rho^{\alpha q+d-1}d\rho\right)^{\frac{1}{q}},$$
choosing $\beta$ such that $\beta q=(d-\alpha) q+d-1.$
Eq. \eqref{eq:dalbass} comes from Lemma \ref{lem:important}.
\end{proof}
\begin{proof}[Proof of  Proposition \ref{prop:scalinv}]$$$$
If we choose
\begin{equation*}
w(\rho)=\left\{
  \begin{array}{ll}
    0, & \hbox{if $0<\rho<R$;} \\
   \frac{1}{\rho^{\alpha-\frac{d}{q}+1+\delta}}, & \hbox{if $\rho>R$.}
  \end{array}
\right.
\end{equation*}
thanks to Lemma \ref{lem:important2} we get \eqref{eq:dalbass1}. In order to get  \eqref{eq:dalbass2} it is enough to choose
\begin{equation*}
w(\rho)=\left\{
  \begin{array}{ll}
    0, & \hbox{if $\rho>R$;} \\
   \frac{1}{\rho^{\alpha-\frac{d}{q}+1-\delta}}, & \hbox{if $0<\rho<R$.}
  \end{array}
\right.
\end{equation*}

\end{proof}

\begin{lem}\label{lem:decayy2}
Let $d\geq 1$,  $\frac{d}{2}<\alpha<d$  and $||D^s u||_{L^2(\R^d)}=||\frac{1}{|x|^{\alpha}}\star |u| ||_{L^2(\R^d)}=1$, then for any $\delta>0$ such that $0<\delta<d-\alpha$,
$$\int_{\R^d} \frac{|u(x)|}{|x|^{\alpha-\frac{d}{2}+\delta}}dx \leq C(\alpha, s,\delta, d).$$
\end{lem}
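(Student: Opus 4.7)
The plan is to split the integral at a fixed radius, say $R=1$, and bound the outer piece using the Riesz potential hypothesis and the inner piece using the Sobolev hypothesis.

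For the outer piece, Proposition \ref{prop:scalinv} (inequality \eqref{eq:dalbass1}) applies directly with $q=2$: since $\alpha>d/2$ we have $d/q<\alpha$, and since $\delta>0$ the proposition gives
$$\int_{B_1(0)^c}\frac{|u(x)|}{|x|^{\alpha-\frac{d}{2}+\delta}}\,dx \;\leq\; C \left\|\frac{1}{|x|^\alpha}\star|u|\right\|_{L^2(\R^d)} \;=\; C.$$

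For the inner piece, the main obstacle is that the weight $|x|^{-(\alpha-d/2+\delta)}$ is \emph{more} singular than the critical scaling weight $|x|^{-(\alpha-d/2)}$, so inequality \eqref{eq:dalbass2} of Proposition \ref{prop:scalinv} is not available on the ball. Instead, I would combine H\"older's inequality with Sobolev embedding. Assuming first that $s<d/2$, I take the sharp Sobolev conjugate pair $p=\frac{2d}{d-2s}$ and $p'=\frac{2d}{d+2s}$, and estimate
$$\int_{B_1(0)}\frac{|u(x)|}{|x|^{\alpha-\frac{d}{2}+\delta}}\,dx \;\leq\; \|u\|_{L^p(B_1(0))}\left(\int_{B_1(0)} |x|^{-(\alpha-\frac{d}{2}+\delta)p'}\,dx\right)^{1/p'}.$$
The Sobolev embedding $\dot H^s(\R^d)\hookrightarrow L^p(\R^d)$ bounds $\|u\|_{L^p}$ by $C\|D^s u\|_{L^2}=C$, while the weight integral is finite precisely because the hypothesis $\delta<d-\alpha$ yields the exponent condition $\left(\alpha-\frac{d}{2}+\delta\right)p'<d$, equivalently $\alpha+\delta<d+s$, which indeed follows from $\alpha+\delta<d\leq d+s$. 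Adding the two bounds gives the claim.

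The case $s\geq d/2$ would be handled analogously by picking any $p\in(2,\infty)$ with $\dot H^s\hookrightarrow L^p$ (for example, by interpolating between $\dot H^s$ and the negative homogeneous Sobolev space whose norm is controlled by the Riesz potential hypothesis, noting that $\|\frac{1}{|x|^\alpha}\star|u|\|_{L^2}$ is comparable to $\||u|\|_{\dot H^{-(d-\alpha)}}$). The only genuine delicacy in the entire argument is matching the interior weight's singularity to the Sobolev conjugate exponent, which is exactly what the smallness assumption $\delta<d-\alpha$ is designed to provide; everything else amounts to combining the two hypotheses through the standard interior/exterior decomposition.
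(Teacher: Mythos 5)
Your outer-region estimate is identical to the paper's: both apply Proposition \ref{prop:scalinv}, inequality \eqref{eq:dalbass1}, with $q=2$. For the inner region your argument takes a genuinely different route. The paper applies Cauchy--Schwarz after splitting the weight as $|x|^{-(\alpha-d/2+\delta-\epsilon)}\cdot|x|^{-\epsilon}$ and then makes the one-line choice $\epsilon=\alpha-\frac{d}{2}+\delta$, which (using $\delta<d-\alpha$ to guarantee $\epsilon<d/2$) collapses the first factor and reduces the claim to $\|u\|_{L^2(B_1)}\lesssim 1$. You instead apply H\"older with the sharp Sobolev conjugate pair $p=\frac{2d}{d-2s}$, $p'=\frac{2d}{d+2s}$, use $\dot H^s\hookrightarrow L^p$, and check the weight integrability. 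Your exponent bookkeeping is correct: $(\alpha-\frac d2+\delta)p'<d$ is equivalent to $\alpha+\delta<d+s$, which holds since $\alpha+\delta<d$ and $s>0$. For $s<d/2$ your argument is therefore complete and a bit more self-contained than the paper's, since it makes the dependence on the $\dot H^s$ hypothesis explicit rather than silently invoking $\|u\|_{L^2(B_1)}\lesssim 1$; note also that your argument only requires $\delta<d-\alpha+s$, slightly weaker than the stated $\delta<d-\alpha$ (which is what the paper's choice of $\epsilon$ forces), though this gains nothing in the end.

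The real gap is your treatment of $s\geq d/2$. The assertion that one can ``pick any $p\in(2,\infty)$ with $\dot H^s\hookrightarrow L^p$'' is false for $s\geq d/2$: the \emph{homogeneous} space $\dot H^s(\R^d)$ embeds into $L^{2d/(d-2s)}$ only when $0<s<d/2$, into BMO at $s=d/2$, and into a homogeneous H\"older space modulo polynomials when $s>d/2$; it does not embed into any Lebesgue space, so there is no admissible $p$ to pick and the inner estimate as written has no starting point. Your parenthetical suggestion --- interpolating $\dot H^s$ against the negative-order norm supplied by the Riesz potential --- is the right instinct but is not carried out, and it hides a genuine obstruction: $\left\|\frac{1}{|x|^\alpha}\star|u|\right\|_{L^2}$ is comparable to the $\dot H^{-(d-\alpha)}$ norm of $|u|$, not of $u$, so for sign-changing $u$ (which the lemma allows) the interpolation inequality between $\dot H^{-(d-\alpha)}$ and $\dot H^s$ is not directly applicable. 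To be fair, the paper's own reduction to $\|u\|_{L^2(B_1)}\lesssim 1$ is also only immediate via Sobolev embedding when $s<d/2$ and is left implicit otherwise; but the paper does not make the false claim about $\dot H^s\hookrightarrow L^p$, which is the concrete error in your write-up. You should either restrict to $s<d/2$, or supply an actual argument for the local $L^2$ (or $L^p$) bound when $s\geq d/2$ --- for instance by combining the weighted $L^1(B_1)$ control coming from \eqref{eq:dalbass2} with high-frequency $L^2$ control from $\|D^su\|_{L^2}$, which requires a short Littlewood--Paley argument rather than a one-line embedding.
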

\begin{proof}
Let $0<\epsilon<\frac{d}{2}$ be a number to be fixed later. We have
$$\int_{B(0,1)}\frac{|u(x)|}{|x|^{\alpha-\frac{d}{2}+\delta}}dx=\int_{B(0,1)}\frac{|u(x)|}{|x|^{\alpha-\frac{d}{2}+\delta-\epsilon}}\frac{1}{|x|^{\epsilon}}dx\leq  $$ $$ \leq c_{d,\epsilon} \left(\int_{B(0,1)}\frac{|u(x)|^2}{|x|^{2(\alpha-\frac{d}{2}+\delta-\epsilon)}}\right)^{\frac 12},$$
where $c_{d,\epsilon}=\left(\int_{B(0,1)}\frac{1}{|x|^{2\epsilon}}dx\right)^{\frac{1}{2}}$. Now choose $\epsilon=\alpha-\frac{d}{2}+\delta$. Notice that $\epsilon<\frac{d}{2}$ such that
$$\int_{B(0,1)}\frac{|u(x)|}{|x|^{\alpha-\frac{d}{2}+\delta}}dx\leq c_{d,\epsilon} \left(\int_{B(0,1)}|u(x)|^2 dx\right)^{\frac 12}.$$

which implies
$$\int_{B(0,1)}\frac{|u(x)|}{|x|^{\alpha-\frac{d}{2}+\delta}}dx\lesssim 1.$$
On the other hand by Proposition \ref{prop:scalinv}, when $\frac{d}{2}<\alpha<d$
$$\int_{B(0,1)^c} \frac{|u(x)|}{|x|^{\alpha-\frac{d}{2}+\delta}}dx \leq C ||\frac{1}{|x|^{\alpha}}\star |u| ||_{L^2(\R^d)}$$
and hence we obtain the claim.\end{proof}

The next Proposition concerning pointwise decay for  radial functions in $X$ follows the strategy of Theorem 3.1 in \cite{D}. We will decompose the function in high/low frequency part, estimating the high frequency part involving the Sobolev norm while we control the low frequency part involving the Riesz norm.
\begin{prop}\label{prop:tutto}
Let $d \geq 2,$  $u$ be a radial function in $X$ with $s>\frac 12$, $\frac{d}{2}<\alpha<d$,  and
\begin{equation}\label{eq.conJ1}
   ||D^s u||_{L^2(\R^d)}=||\frac{1}{|x|^{\alpha}}\star |u| ||_{L^2(\R^d)}=1.
\end{equation}
Then for any $\sigma$ satisfying
\begin{equation}\label{eq.siim1}
   \frac{2s\left(\frac{d}{2}-1\right) + \left(\frac{d}{2} \right) }{2s+1} < \sigma < \frac{2s(d-1)- (2s-1) \left(\alpha - \frac{d}{2} \right) }{2s+1}
\end{equation}
we have
$$|u(x)|\leq C(\alpha, s,\sigma, d)|x|^{-\sigma}.$$
\end{prop}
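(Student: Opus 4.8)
The plan is to split $u$ into low- and high-frequency parts via a Littlewood--Paley cutoff at frequency $N=N(|x|)$ to be optimized at the end, and to estimate each piece pointwise using the two available quantities: the homogeneous Sobolev norm $\|D^s u\|_{L^2}$ for the high frequencies, and the Riesz-potential control through Lemma \ref{lem:decayy2} for the low frequencies. Write $u = u_{\le N} + u_{>N}$ where $\widehat{u_{\le N}}$ is supported in $\{|\xi|\le N\}$. Since $u$ is radial, both pieces are radial, and we can use the explicit radial Fourier inversion formula (Bessel functions $J_\nu$, in the spirit of \cite{CO}) to get a pointwise bound.

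For the high-frequency part, the idea is standard: in the radial representation $u_{>N}(x) = c_d |x|^{-(d-2)/2}\int_N^\infty \widehat u(\rho) J_{(d-2)/2}(2\pi\rho|x|)\,\rho^{d/2}\,d\rho$, one inserts $\rho^s \rho^{-s}$, applies Cauchy--Schwarz, and uses the decay $|J_\nu(t)|\lesssim t^{-1/2}$ of the Bessel function to obtain something like $|u_{>N}(x)| \lesssim |x|^{-(d-1)/2}\,\|D^s u\|_{L^2}\,\big(\int_N^\infty \rho^{d-1-2s}\rho^{-1}d\rho\big)^{1/2} \lesssim |x|^{-(d-1)/2} N^{(d/2)-s-1/2}$, valid because $s>\tfrac12$ makes the $\rho$-integral converge at infinity only if $2s-1 > d-1$; if not one truncates differently — in general one gets a bound of the form $|u_{>N}(x)|\lesssim |x|^{-(d-1)/2}N^{\beta}$ for an appropriate $\beta$ depending on $d,s$. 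For the low-frequency part, one instead estimates $|u_{\le N}(x)|$ by an $L^1$-type bound on the frequency side: $|u_{\le N}(x)| \le \int_{|\xi|\le N}|\widehat u(\xi)|d\xi$, but to bring in the Riesz potential one dualizes in physical space — bound $|u_{\le N}(x)|$ against a weighted $L^1$ norm $\int |u(y)| |y|^{-(\alpha - d/2 + \delta)}dy$ using that the kernel of the low-pass projection, multiplied by the weight $|y|^{\alpha-d/2+\delta}$, is bounded by something like $N^{\gamma}$ on the relevant region — then invoke Lemma \ref{lem:decayy2} to control that weighted integral by a constant. This yields $|u_{\le N}(x)| \lesssim N^{\gamma}$ for a suitable exponent $\gamma=\gamma(d,\alpha,\delta)$, where $\delta$ can be taken arbitrarily small.

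Then one optimizes: choosing $N = |x|^{-a}$ for the right power $a>0$ balances $|x|^{-(d-1)/2}N^{\beta}$ against $N^{\gamma}$, and the resulting decay exponent $\sigma$ is a convex-combination-type expression in $d, s, \alpha$; letting $\delta\to 0^+$ the endpoints of the admissible range of $\sigma$ should be exactly the two bounds in \eqref{eq.siim1}. Concretely, the left endpoint $\tfrac{2s(d/2-1)+d/2}{2s+1}$ should come from the pure high-frequency scaling (matching $|x|^{-(d-1)/2}$ against $N^{(d/2)-s-1/2}$ with $N\sim|x|^{-1}$ gives $\sigma = \tfrac{d-1}{2} - (\tfrac d2 - s - \tfrac12) = s$... so more precisely the balance point, not just one term), and the right endpoint from the low-frequency Riesz control as $\delta\to0$. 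The structure $\tfrac{A + B}{2s+1}$ in both endpoints strongly suggests that $a$ (the power in $N=|x|^{-a}$) is forced to be related to $\tfrac{1}{2s+1}$ by the balancing.

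\textbf{Main obstacle.} The genuinely delicate step is the low-frequency estimate: turning the crude bound $|u_{\le N}(x)|\le \int_{|\xi|\le N}|\widehat u|\,d\xi$ into something governed by the Riesz potential of $|u|$. One cannot bound $\|\widehat u\|_{L^1(|\xi|\le N)}$ by $\|\tfrac{1}{|x|^\alpha}\star|u|\|_{L^2}$ directly; the route through Lemma \ref{lem:decayy2} requires writing the low-pass projection as convolution with a (radial) kernel $K_N$ with $\|K_N\|_{\infty}\sim N^d$ and fast decay, splitting the convolution integral over $\{|y|\lesssim |x|\}$ and $\{|y|\gtrsim|x|\}$ (or near/far from $x$), and on each region bounding $|K_N(x-y)|$ by a constant times $|y|^{-(\alpha-d/2+\delta)}\cdot(\text{power of }N\text{ and }|x|)$ so that Lemma \ref{lem:decayy2} applies. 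Getting the exponents in this kernel-versus-weight comparison to come out exactly so that the final $\sigma$-range is \eqref{eq.siim1} — and checking the range is nonempty precisely under the hypotheses $s>\tfrac12$, $\tfrac d2<\alpha<d-\tfrac12$ — is where the bookkeeping is heaviest and where I expect to spend most of the effort. The high-frequency Bessel estimate, by contrast, is essentially a Strauss-type computation and should go through routinely.
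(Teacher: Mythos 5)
Your overall strategy coincides with the paper's: decompose $u$ at a frequency scale, estimate the high-frequency piece via the radial Fourier representation and the uniform Bessel bound $|J_\nu(t)|\lesssim(1+t)^{-1/2}$, control the low-frequency piece through a weighted $L^1$ norm that Lemma~\ref{lem:decayy2} makes finite, and optimize the scale. However, two things need fixing before this is a proof.

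First, your high-frequency exponent is wrong. After Cauchy--Schwarz the radial Jacobian $\rho^{d-1}$ must go with $|\widehat u(\rho)|^2\rho^{2s}$ to form the $\dot H^s$ norm, and the Bessel decay must supply the extra $|x|^{-1/2}$:
\begin{equation*}
|u_{>N}(x)|\lesssim |x|^{-\frac{d-2}{2}}\,\|D^s u\|_{L^2}\left(\int_N^\infty |J_{\frac{d-2}{2}}(|x|\rho)|^2\,\rho^{1-2s}\,d\rho\right)^{1/2}\lesssim |x|^{-\frac{d-1}{2}}\,N^{\frac12-s},
\end{equation*}
so the correct exponent is $\beta=\tfrac12-s$ (equivalently $R^{s-1/2}$ with $R\sim1/N$), not $\tfrac d2 - s -\tfrac12$, and the convergence condition is simply $s>\tfrac12$, not $2s-1>d-1$. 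Your exponent agrees with the correct one only when $d=2$, and with it the final optimization would not produce the range~\eqref{eq.siim1}. Second and more substantively, the step you label the main obstacle is genuinely the missing ingredient, and the vague bound ``$|K_N|\lesssim |y|^{-(\alpha-d/2+\delta)}\cdot(\text{power of }N,|x|)$'' does not yet carry the argument. What is actually needed is a precise radial kernel estimate such as the paper's Lemma~\ref{l.dN1}: for $f$ radial, $b\in(-d+1,0)$, $\gamma>d-1$,
\begin{equation*}
\left|\int_{\R^d}\frac{f(|y|)\,dy}{(1+|x-y|^2)^{\gamma/2}}\right|\lesssim \frac{1}{|x|^{d-1+b}}\,\bigl\||y|^b f\bigr\|_{L^1(\R^d)}.
\end{equation*}
Applied to $\psi_R\star u$ with $b=-(\alpha-\tfrac d2+\delta)$ (here is where $\tfrac d2<\alpha<d$ enters, to put $b$ in the admissible range) and rescaled, it gives $|\psi_R\star u(x)|\lesssim R^{-1}|x|^{-(d-1+b)}\||y|^b u\|_{L^1}$, and Lemma~\ref{lem:decayy2} bounds the last factor. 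Note this lemma uses the radiality of $u$ in an essential way: the $|x|^{-(d-1)}$ gain comes from averaging the kernel over spheres, and without radiality you would not get the full gain, hence not the stated $\sigma$-range. Balancing $|x|^{-(d-1)/2}R^{s-1/2}$ against $R^{-1}|x|^{-(d-1+b)}$ via $R^{s+1/2}=|x|^{-b-(d-1)/2}$ then recovers exactly~\eqref{e.sig1}, and letting $\delta\downarrow 0$ gives the open interval~\eqref{eq.siim1}.
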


\begin{rem}
It is easy to see that the above Proposition  is equivalent to the following statement.

Let $u$ be a radial function in $X$ with $s>\frac 12$, $\frac{d}{2}<\alpha<d$,  and
\begin{equation}\label{eq.conJm1}
   ||D^s u||_{L^2(\R^d)}=||\frac{1}{|x|^{\alpha}}\star |u| ||_{L^2(\R^d)}=1
\end{equation}
then for any $\delta>0$ such that $0<\delta<d-\alpha$,
$$|u(x)|\leq C(\alpha, s,\delta, d)|x|^{-\sigma}$$
with
\begin{equation}\label{e.sig1}
 \sigma=\frac{-(2s-1)(\alpha-\frac{d}{2}+\delta)+2s(d-1)}{2s+1}.
\end{equation}

\end{rem}

\begin{proof}
For any $R>1$ we can take a function $\psi_R(x)=R^{-d}\psi(x/R)$ such  that $\widehat{\psi}(\xi)$ is a radial nonnegative function with support in $|\xi| \leq 2$ and $\widehat{\psi}(\xi)=1$ for $|\xi|\leq 1$ and then we make the  decomposition of $u$ into low and high frequency part as follows
$$u(x)=\psi_R \star u(x)+ h(x)$$
where $\hat h(\xi)=(1-\hat \psi(R|\xi|))\hat u(\xi)$.
 For the high frequency part
we will use
Fourier representation for radial functions in $\R^d$ (identifying the function with its profile)
\begin{equation}\label{eq:rap}
|h(x)|=(2\pi)^{\frac{d}{2}}|x|^{-\frac{d-2}{2}}\int_0^{\infty} J_{\frac{d-2}{2}}(|x|\rho) (1-\psi(R\rho))\hat u (\rho)\rho^{\frac{d}{2}}d\rho
\end{equation}
where $J_{\frac{d-2}{2}}$ is the Bessel function of order $\frac{d-2}{2}.$
Applying the results in  \cite{CO} and \cite{D}, we find
\begin{equation}\label{claim B}
|h(x)|\leq c R^{s-\frac 12}|x|^{-\frac 12(d-1)}||u||_{ \dot H^s(\R^d)}, \  s > \frac{1}{2}.
\end{equation}
Indeed, using the uniform bound
$$ |J_{\frac{d-2}{2}}(\rho)| \lesssim (1+\rho)^{-1/2},   $$
we  get
$$ |h(x)|\lesssim |x|^{-\frac{d-2}{2}}\int_0^{\infty} |(J_{\frac{d-2}{2}})(|x|\rho)| |(1-\psi(R\rho))| |\hat u (\rho)|\rho^{\frac{d}{2}}d\rho \lesssim $$
$$  |x|^{-\frac{d-2}{2}}\left(\int_{1/R}^{\infty} |J_{\frac{d-2}{2}}(|x|\rho)|^2  \frac{ d \rho}{\rho^{2s-1}} \right)^{1/2} \left(\int_0^{\infty} |\hat u (\rho)|^2 \rho^{2s + d-1}d\rho\right)^{1/2} \lesssim$$
$$ |x|^{-\frac{d-2}{2}}R ^{s-1}\left(\int_{1}^{\infty} (1+|x|\rho/R)^{-1}   \frac{ d \rho}{\rho^{2s-1}} \right)^{1/2} \|u\|_{\dot{H}^s(\mathbb{R}^d)} \lesssim $$ $$ \lesssim R^{s-1/2}  |x|^{-\frac{d-1}{2}} \|u\|_{\dot{H}^s(\mathbb{R}^d)}  $$
and this gives \eqref{claim B}.

For low frequency term
$\psi_R \star u(x)$, since $\psi \in \mathcal{S}\left(\mathbb{R}^{d}\right)$, we can take any $\gamma>1$ so that there exists $C>0$ such that
$$
|\psi(x)| \leq C\left(1+|x|^{2}\right)^{-\gamma / 2}.
$$
We shall need the following estimate that can be found also in \cite{DL} and \cite{D}. For sake of completeness we give an alternative proof of the Lemma in the Appendix.

\begin{lem}\label{l.dN1} If
$ b \in (-d+1, 0), \gamma > d-1,$ then for any radially symmetric function $f(|y|)$ we have
\begin{equation}\label{eq.dN1}
   \left| \int_{\mathbb{R}^d} \frac{f(|y|) dy}{(1+|x-y|^2)^{\gamma/2}} \right| \lesssim \frac{1}{|x|^{d-1+b}} \left\| |y|^b f\right\|_{L^1(\mathbb{R}^d)}.
\end{equation}
\end{lem}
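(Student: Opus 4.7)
The plan is to reduce the integral to a one-dimensional radial integral in $r=|y|$ and then estimate the spherical average of the kernel in three regimes according to the ratio of $r$ to $|x|$. Since $f$ is radial and the kernel only depends on $|x-y|$, the integral
$$I(x):=\int_{\mathbb{R}^d}\frac{f(|y|)\,dy}{(1+|x-y|^2)^{\gamma/2}}$$
is a radial function of $x$, and I will work throughout for $|x|\geq 1$ (the stated bound is vacuous for small $|x|$ since $d-1+b>0$). Passing to polar coordinates $y=r\omega$ and setting
$$K_r(x):=\int_{S^{d-1}}\frac{d\sigma(\omega)}{(1+|x-r\omega|^2)^{\gamma/2}},$$
I write $I(x)=\int_0^\infty r^{d-1}f(r)K_r(x)\,dr$ and split the $r$-integral into the three regions (i) $r\leq |x|/2$, (ii) $|x|/2\leq r\leq 2|x|$, (iii) $r\geq 2|x|$.

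In regions (i) and (iii), the triangle inequality yields $|x-r\omega|\gtrsim \max(|x|,r)$ uniformly in $\omega$, hence the trivial bound $K_r(x)\lesssim \max(|x|,r)^{-\gamma}$. For (i), factoring $r^{d-1}f(r)=r^{-b}\cdot r^{d-1+b}f(r)$ and using $b<0$ together with $r\leq |x|/2$ bounds the contribution by $|x|^{-(\gamma+b)}\||y|^b f\|_{L^1(\mathbb{R}^d)}$. For (iii), the analogous factorization $r^{d-1-\gamma}=r^{-(\gamma+b)}\cdot r^{d-1+b}$ combined with $\gamma+b>0$ and $r\geq 2|x|$ gives the same bound. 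The hypothesis $\gamma>d-1$ together with $|x|\geq 1$ then yields $|x|^{-(\gamma+b)}\leq |x|^{-(d-1+b)}$, which is the desired rate.

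The main obstacle is region (ii), where the kernel is concentrated in an $O(1)$-neighborhood of the sphere $\{|y|=|x|\}$ and no useful pointwise-in-$\omega$ bound is available. Aligning $x=|x|e_1$ and parametrizing $\omega$ by $t=\cos\angle(\omega,e_1)$, so that $d\sigma=|S^{d-2}|(1-t^2)^{(d-3)/2}dt$ and $|x-r\omega|^2=(|x|-r)^2+2|x|r(1-t)$, the substitution $\tau=2|x|r(1-t)$ produces
$$K_r(x)\lesssim (|x|r)^{-(d-1)/2}\int_0^{4|x|r}\frac{\tau^{(d-3)/2}\,d\tau}{\bigl(1+(|x|-r)^2+\tau\bigr)^{\gamma/2}}\lesssim (|x|r)^{-(d-1)/2}\bigl(1+(|x|-r)^2\bigr)^{-(\gamma-d+1)/2},$$
where convergence of the $\tau$-integral is exactly where $\gamma>d-1$ is used. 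Since $r\asymp |x|$ in this region, this reduces to $K_r(x)\lesssim |x|^{-(d-1)}$, and the contribution becomes at most $\int_{|x|/2}^{2|x|}f(r)\,dr\lesssim |x|^{-(d-1+b)}\||y|^b f\|_{L^1(\mathbb{R}^d)}$ after using $d-1+b>0$ and $r\geq |x|/2$ to write $f(r)=r^{-(d-1+b)}\cdot r^{d-1+b}f(r)$. Summing the three contributions proves \eqref{eq.dN1}. The crux is the surface-integral computation in region (ii); the other two regions are essentially one-dimensional weighted integrals.
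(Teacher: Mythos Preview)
Your proof is correct and follows essentially the same route as the paper's: the same decomposition into $|y|\ll|x|$, $|y|\sim|x|$, $|y|\gg|x|$, with the same spherical-average estimate $K_r(x)\lesssim|x|^{-(d-1)}$ in the middle region obtained by the same angular computation (the paper parametrizes by the colatitude $\varphi$ where you use $t=\cos\varphi$, and both reduce to the convergent one-dimensional integral requiring $\gamma>d-1$). The only cosmetic difference is that in the outer regions the paper factors $(1+|x-y|^2)^{-\gamma/2}=(1+|x-y|^2)^{-(d-1)/2}(1+|x-y|^2)^{-(\gamma-d+1)/2}$ and uses $\max(|x|,|y|)^{d-1}\gtrsim|x|^{d-1+b}|y|^{-b}$ to land directly on the exponent $d-1+b$ for all $x$, whereas you obtain the stronger decay $|x|^{-(\gamma+b)}$ and then restrict to $|x|\ge1$.
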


Then we  estimate $\psi_R \star u(x)$ as follows,
$$
\begin{aligned}
|\psi_R \star u(x)| & \leq \left|\psi_{R}(x)\right| *|u(x)| \leq C \int_{\mathbb{R}^{d}} \frac{1}{R^{d}} \frac{|u(y)|}{\left(1+\left|\frac{x-y}{R}\right|\right)^{\gamma / 2}} d y \\
& \leq C \int_{\mathbb{R}^{d}} \frac{|u(R z)|}{\left(1+\left|\frac{x}{R}-z\right|^{2} d z\right)^{\gamma / 2}} d z \quad(y=R z) .
\end{aligned}
$$
To this end we plan to apply Lemma \ref{l.dN1} assuming $b=-(\alpha-d/2+\delta)$. To check the assumption of the Lemma we use the inequalities
$$ \alpha - \frac{d}{2}+\delta < \frac{d}{2} \leq d-1$$ for $d \geq 2.$ Applying the Lemma \ref{l.dN1} we deduce
$$
\begin{aligned}
&|\psi_R \star u(x)|\\
& \leq C\left|\frac{x}{R}\right|^{-(d-1+b)} \int_{\mathbb{R}^{d}}|u(R z)|^{}|z|^{b} d z   \\
& \leq C R^{(d-1+b)}|x|^{-(d-1+b)}\int_{\mathbb{R}^{d}}|u(y)|^{}\left|\frac{y}{R}\right|^{b} \frac{d y}{R^{d}} \\
& \leq C R^{-1 }|x|^{-(d-1+b)}\||y|^{b}u\|_{L^{1}(\R^d)}.
\end{aligned}
$$

Therefore, collecting our estimates and using the condition \eqref{eq.conJ1}, we find
$$
\begin{aligned}
|u(x)| & \leq C\left[|x|^{-(d-1) / 2} R^{s-1 / 2}+|x|^{-(d-1+b) } R^{-1 }\||y|^b u\|_{L^{1}(\mathbb{R}^d)}\right].
\end{aligned}
$$

We use Lemma \ref{lem:decayy2} and we get
$$
\begin{aligned}
|u(x)| & \leq C\left[|x|^{-(d-1) / 2} R^{s-1 / 2}+|x|^{-(d-1+b) } R^{-1 }\right].
\end{aligned}
$$

Minimizing in $R$ or equivalently choosing  $R>0$ so that
$$|x|^{-(d-1) / 2} R^{s-1 / 2} = |x|^{-(d-1+b) } R^{-1 },$$
i.e.
$$ R^{s+1/2} = |x|^{-b - (d-1)/2},$$
we find
\begin{equation*}\label{denap1}
 |u(x)|\leq C(d,s,\alpha,\delta)|x|^{-\sigma}.
\end{equation*}
where $\sigma $ is defined in \eqref{e.sig1}.

This completes the proof.

\end{proof}

With all these preliminary results we are now ready to prove Theorem \ref{thm:main}.
\begin{proof}

Let $u\in X$ with $||D^s u||_{L^2(\R^d)}=||\frac{1}{|x|^{\alpha}}\star |u| ||_{L^2(\R^d)}=1$, then by Proposition  \ref{prop:tutto}
$$|u(x)|\leq C(d,s,\alpha,\delta)|x|^{-\sigma}$$
with
$$\sigma=\frac{-(2s-1)(\alpha-\frac{d}{2}+\delta)+2s(d-1)}{2s+1}.$$
 We aim to show that
$p_{rad}<2$, where $p=2$ is the lower endpoint for \eqref{GagliNiren2}. Therefore it sufficies to show that $\int_{|x|>1}|u|^pdx<+\infty$
provided that $u \in X$ and $p_{rad}<p$ (indeed $\int_{|x|\leq 1}|u|^pdx<+\infty$ for all $0<p<2$ by interpolation). \\
We have, thanks to Proposition \ref{prop:tutto} and Lemma \ref{lem:decayy2},
\begin{equation}
\int_{|x|>1}|u||u|^{p-1}dx\lesssim \int_{|x|>1}\frac{|u|}{|x|^{\sigma(p-1)}}dx\lesssim 1
\end{equation}
provided that $\sigma(p-1)>\alpha-\frac{d}{2}$. This condition is equivalent,  $\sigma $ is defined in \eqref{e.sig1} and letting $\delta \rightarrow 0$, to
$$p>\frac{\sigma + \alpha-\frac{d}{2}}{\sigma}=\frac{2(\alpha-\frac{d}{2})+2s(d-1)}{-(2s-1)(\alpha-\frac{d}{2})+2s(d-1)}:=p_{rad}.$$
An elementary computation shows that $p_{rad}<2$ provided that $\frac{d}{2}<\alpha<{d-\frac12}$.

Now consider an arbitrary $v\in X$ and let us call $u=\lambda v(\mu x)$ where the parameters $\lambda, \mu>0$ are chosen such that
 $||D^s u||_{L^2(\R^d)}=||\frac{1}{|x|^{\alpha}}\star |u| ||_{L^2(\R^d)}=1$. By scaling we have
 $$1=||D^s u||_{L^2(\R^d)}=\lambda \mu^{s-\frac{d}{2}}||D^s v||_{L^2(\R^d)}$$
 $$1=||\frac{1}{|x|^{\alpha}}\star |u| ||_{L^2(\R^d)}=\lambda \mu^{\alpha-\frac{3}{2}d}||\frac{1}{|x|^{\alpha}}\star |v| ||_{L^2(\R^d)}$$
 and hence we obtain the relations
 $$\mu=\left(\frac{||D^s v||_{L^2(\R^d)}}{||\frac{1}{|x|^{\alpha}}\star |v| ||_{L^2(\R^d)}}\right)^{\frac{1}{\alpha-s-d}}, \ \ \lambda=\frac{||\frac{1}{|x|^{\alpha}}\star |v| ||_{L^2(\R^d)}^{\frac{s-\frac{d}{2}}{\alpha-d-s}}}{||D^s v||_{L^2(\R^d)}^{\frac{\alpha-\frac{3}{2}d}{\alpha-d-s}}}.$$
 By the previous estimates we have
 $$||u||_{L^p(\R^d)}=\lambda \mu^{-\frac{d}{p}}||v||_{L^p(\R^d)}\lesssim 1$$
 which implies
 $$||v||_{L^p(\R^d)}\lesssim \lambda^{-1}\mu ^{\frac{d}{p}} = ||D^s v||_{L^2(\R^d)}^{\theta} \|\frac{1}{|x|^{\alpha}}\star |v|\|_{L^2(\R^d)}^{1-\theta},$$
 where
 $$\theta = \frac{d^2-2\alpha p+3dp-2ds}{2p(d+s-\alpha)}, \ \ 1-\theta = \frac{(2s-d)(d+p)}{2p(d+s-\alpha)}.$$
 It is easy to see that
 $\theta$ is fixed by the scaling invariance
$$\frac{d}{p}=(1-\theta)((d-\alpha)+\frac{d}{2})+\theta(-s+\frac{d}{2}).$$
\end{proof}

\section{Proof of Theorem \ref{thm:super}}

Our goal is to represent $\varphi$ in the form  $\varphi = \frac{1}{|x|^{\alpha}}\star u = c D^{-r} u, $ with $\alpha = d-r, $
$ c = \frac{\pi^{d / 2} \Gamma((d-\alpha) / 2)}{ \Gamma(\alpha / 2)}$
 and apply Theorem \ref{thm:main}.
Therefore, we choose (modulo constant)   $u = D^r \varphi.$

Then the estimate of Theorem \ref{thm:main} gives
$$ \|D^r \varphi\|_{L^p(\R^d)} = \| u\|_{L^p(\R^d)}\lesssim  \, \|\frac{1}{|x|^{\alpha}}\star |u|\|_{L^2(\R^d)}^{1-\theta} \, \|D^s u \|_{L^2(\R^d)}^{\theta} =$$
$$ =   \left\| D^{-r} |D^{r} \varphi| \right\|_{L^2(\R^d)}^{1-\theta} \|D^S \varphi|\|_{L^2(\R^d)}^{\theta}.$$
By the assumption
\begin{equation}\label{eq.pos1}
   D^{r} \varphi( x) \geq 0
\end{equation}
for almost every $x \in \mathbb{R}^d,$ then we deduce
$$   \left\| D^{-r} |D^{r} \varphi| \right\|_{L^2(\R^d)}^{1-\theta} \|D^S \varphi|\|_{L^2(\R^d)}^{\theta}=   \left\| D^{-r} D^{r} \varphi \right\|_{L^2(\R^d)}^{1-\theta} \|D^S \varphi|\|_{L^2(\R^d)}^{\theta}$$ and we obtain \eqref{GagliNiren5}. The lower endpoint $p_0$ is hence nothing but $p_{rad}$ of Theorem \ref{thm:main} substituting $\alpha$ with $d-r$ and $s$ with $S-r$.
The condition $\frac 12 <r<\min(\frac{d}{2}, S-\frac 12)$ is  equivalent to  the conditions $\frac{d}{2}<\alpha<d-\frac 12$, $s>\frac 12$ of Theorem \ref{thm:main}. All these estimates
remain valid if we consider    $D^{r} \varphi( x) \leq 0$, i.e if $\varphi \in H^{s,r}_{rad, -}(\R^d)$. Indeed if $\varphi \in H^{s,r}_{rad, -}(\R^d)$
$$   \left\| D^{-r} |D^{r} \varphi| \right\|_{L^2(\R^d)}^{1-\theta} \|D^S \varphi|\|_{L^2(\R^d)}^{\theta}=  $$ $$ =  \left\| -D^{-r} D^{r} \varphi \right\|_{L^2(\R^d)}^{1-\theta} \|D^S \varphi|\|_{L^2(\R^d)}^{\theta}=\left\|  \varphi \right\|_{L^2(\R^d)}^{1-\theta} \|D^S \varphi|\|_{L^2(\R^d)}^{\theta}.$$
\section{Proof of Theorem \ref{thm:main2}}

We prove that under  the assumption of Theorem \ref{thm:main2}, the embedding
$$H^{S,r_0}_{rad,+}(\R^d)\subset \subset \dot H^{r_0}_{rad}(\R^d)
$$
is compact. As a byproduct  the embedding
\begin{equation}\label{eq:intcomp}
H^{S,r_0}_{rad,+}(\R^d)\subset \subset \dot H^{r}_{rad}(\R^d)
\end{equation}
is compact for any $0<r<S$. The embedding \eqref{eq:intcomp} follows noticing that if  $\varphi_n$ converges weakly to some $\varphi$ in $H^{S }_{rad}(\R^d)$ then  $\varphi_n$ converges weakly to the same $\varphi$ in $H^{ r_0 }_{rad}(\R^d)$. Now if we prove that (taking a subsequence)
\begin{equation}\label{eq.cco1}
    \|D^{r_0} (\varphi_n-\varphi)\|_{L^2}=o(1)
\end{equation}
as $n \to \infty,$ then
 by the following interpolation inequalities
$$\|D^r (\varphi_n-\varphi)\|_{L^2}\lesssim\|D^{r_0} (\varphi_n-\varphi)\|_{L^2}^{1-\frac{r-r_0}{S-r_0}} \, \|D^{S} (\varphi_n-\varphi)\|_{L^2}^{\frac{r-r_0}{S-r_0}}=o(1)$$
if $0<r_0<r<S$ and
$$\|D^r (\varphi_n-\varphi)\|_{L^2}\lesssim\| (\varphi_n-\varphi)\|_{L^2}^{1-\frac{r}{r_0}} \, \|D^{r_0} (\varphi_n-\varphi)\|_{L^2}^{\frac{r}{r_0}}=o(1)$$
if  $0<r<r_0$, we get \eqref{eq:intcomp}.

To prove \eqref{eq.cco1} we recall that  $(\varphi_n)_{n\in\mathbb N}$ is a bounded sequence in $H^{S,r_0}_{rad,+}(\R^d)$ and we can assume that $\varphi_n$ converges weakly to some $\varphi$ in $H^{S}(\R^d)$. To simplify the notation we will use $r$ instead of $r_0$ in the proof of \eqref{eq.cco1}. We choose a bump function  $\theta\in C_0^\infty(\R^d)$, such that  $\theta = 1$ on $B_1$ and $\theta = 0$ in $\R^d \setminus B_{2 }$  and for any $ \rho > 1$ we define $\theta_\rho(x) = \theta(x/\rho).$ Clearly the   multiplication by $\theta_{\rho} \in \mathcal S(\R^d)$ is a continuous mapping $H^{S} (\R^d)\rightarrow H^{S} (\R^d)$.
Now setting $v_n=\theta_\rho \varphi_n$ and $v=\theta_\rho \varphi$ we aim to show that
\begin{equation}\label{eq.smth1}
   \lim_{n \to \infty} \|D^{r}( v_n-v)\| _{L^{2}(\R^d)}^2 = \lim_{n \to \infty} \|D^{r}(\theta_{\rho}( \varphi_n-\varphi))\| _{L^{2}(\R^d)}^2 = 0.
\end{equation}
for any $r \in [0, S).$

Indeed, by Plancharel's identity we have
\begin{equation*}
\|D^{r}( v_n-v)\| _{L^{2}(\R^d)}^2=\underbrace{\int_{| \xi|\leq R} |\xi|^{2r}| \widehat v_n(\xi)-\widehat v(\xi)|^2 d\xi}_{=I}+\underbrace{\int_{ |\xi| > R}|\xi|^{2r}| \widehat v_n(\xi)-\widehat v(\xi)|^2 d\xi}_{=II}.
\end{equation*}
Clearly $$II\leq \frac{1}{R^{2( S-r)}}\int_{ |\xi| > R}|\xi|^{2 S}| \widehat v_n(\xi)-\widehat v(\xi)|^2 d\xi$$
and then we can choose $R>0$ such that $II\leq \frac{\epsilon}{2}$.
\\
Since $e^{-2\pi i x\cdot \xi}\in L^2_x(B_{2 \rho})$, by weak convergence in $L^2(B_{2 \rho})$ we have $\widehat v_n(\xi) \rightarrow \widehat v (\xi)$ almost everywhere. Notice that $||\widehat v_n ||_{L^{\infty}}\leq || v_n|| _{L^1(B_{2 \rho})}\leq
\mu(B_{2 \rho})^{\frac 12} || v_n||_{L^2(B_{2 \rho})}\leq \mu(B_{2 \rho})^{\frac 12}||v_n||_{H^{ S}(\R^d)}$ and hence  $| \widehat v_n(\xi)-\widehat v(\xi)|^2$ is estimated by a uniform constant so that by Lebesgue's dominated convergence theorem
$$I=\int_{| \xi|\leq R} |\xi|^{2r}| \widehat v_n(\xi)-\widehat v(\xi)|^2 d\xi<\frac{\epsilon}{2},$$
for $n$ sufficiently large. This proves \eqref{eq.smth1}.

Our next step is to
 show that for a given $\varepsilon >0$ one can find  $\rho_0=\rho_0(\varepsilon)$ sufficiently large and $n_0(\varepsilon)$ sufficiently large so that
\begin{equation}\label{eq.mestr01}
  \|\theta_{\rho} D^{r}( \varphi_n-\varphi)\| _{L^{2}(\R^d)}^2 \leq \frac{\varepsilon}{2}
\end{equation}
for $ n\geq n_0, \rho\geq \rho_0$ and any $ r \in [0,S).$

We  consider first  the case $0\leq r \leq 2, r<S.$ The cases $r=0$ and  $r=2$ are trivial, for this we assume  $0 < r  <  \min (2,S).$
We  shall use the following statement (see Corollary 1.1 in \cite{FGO}).
\begin{prop}
\label{Corollary:1.1}
Let $p,p_1,p_2$ satisfy $1 < p, p_1, p_2 < \infty$ and $1/p = 1/p_1 + 1/p_2$.
Let $r,r_1,r_2$ satisfy $0 \leq r_1, r_2 \leq 1$, and $r = r_1 + r_2$.
Then the following bilinear estimate
	\[
	\|\underbrace{ D^r(fg) - f D^r g}_{= [D^r,f]g} - g D^r f \|_{L^{p}}
	\leq C \|D^{r_1} f\|_{L^{p_1}} \| D^{r_2} g\|_{L^{p_2}}
	\]
holds for all $f,g \in \mathcal S$.
\end{prop}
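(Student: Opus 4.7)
The plan is to reduce the ``double commutator'' on the left-hand side to a pointwise Cauchy--Schwarz estimate via the Riesz singular integral representation of the fractional Laplacian, and then to conclude with the Strichartz square-function characterization of the fractional Sobolev seminorm.

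\emph{Step 1 (pointwise identity).} I would first assume $0<r<2$, so that $D^r$ admits the representation
$$D^r f(x) = c_{d,r}\,\mathrm{p.v.}\int_{\R^d}\frac{f(x)-f(y)}{|x-y|^{d+r}}\,dy.$$
Writing the algebraic identity
$$f(x)g(x)-f(y)g(y) = (f(x)-f(y))g(x) + f(x)(g(x)-g(y)) - (f(x)-f(y))(g(x)-g(y))$$
inside the singular kernel, the first two terms reconstruct $g(x)D^rf(x)$ and $f(x)D^rg(x)$, yielding
$$D^r(fg)(x) - f(x)D^r g(x) - g(x)D^r f(x) = -c_{d,r}\int_{\R^d}\frac{(f(x)-f(y))(g(x)-g(y))}{|x-y|^{d+r}}\,dy,$$
the integral being absolutely convergent for $f,g\in\mathcal S$ since the numerator is $O(|x-y|^2)$ as $y\to x$. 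The endpoint $r=2$ (corresponding to $r_1=r_2=1$) is covered instead by the classical Leibniz identity $\Delta(fg) = g\Delta f + 2\nabla f\cdot\nabla g + f\Delta g$.

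\emph{Step 2 (Cauchy--Schwarz and Strichartz).} Splitting $d+r = (d/2+r_1)+(d/2+r_2)$ and applying the Cauchy--Schwarz inequality inside the integral of Step 1 produces the pointwise bound
$$|D^r(fg)(x) - fD^r g(x) - gD^r f(x)| \le c\, S_{r_1}f(x)\,S_{r_2}g(x),$$
where, for $0<\tau<1$, the pointwise square function of order $\tau$ is
$$S_\tau h(x) := \left(\int_{\R^d}\frac{|h(x)-h(y)|^2}{|x-y|^{d+2\tau}}\,dy\right)^{1/2}.$$
Taking the $L^p$ norm, applying H\"older's inequality with exponents $p_1,p_2$, and invoking the Strichartz characterization
$$\|S_\tau h\|_{L^q} \sim \|D^\tau h\|_{L^q}, \qquad 0<\tau<1,\ 1<q<\infty,$$
one recovers the claimed estimate. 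The boundary cases $r_1\in\{0,1\}$ or $r_2\in\{0,1\}$ are then obtained either directly from H\"older (when $r_i=0$) and the Leibniz rule for $\nabla$ (when $r_i=1$), or via complex interpolation.

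\emph{Main obstacle.} The core of the argument is entirely algebraic in Step 1 and trivially pointwise in Step 2; the one non-trivial ingredient is the Strichartz square-function equivalence, whose proof rests on Littlewood--Paley theory and $g$-function estimates. I would use it as a black box, so that the entire proof of the proposition reduces to the identity of Step 1 combined with the pointwise Cauchy--Schwarz of Step 2.
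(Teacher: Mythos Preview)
The paper does not prove this proposition: it is quoted as Corollary~1.1 of \cite{FGO} and used as a black box in the proof of Theorem~\ref{thm:main2}. Your proposal therefore supplies more than the paper itself does.

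Your argument is the classical Kenig--Ponce--Vega route and is correct in the open range $0<r_1,r_2<1$. The singular-integral representation of $D^r$ for $0<r<2$, the algebraic identity in Step~1, the pointwise Cauchy--Schwarz splitting $|x-y|^{-(d+r)}=|x-y|^{-(d/2+r_1)}|x-y|^{-(d/2+r_2)}$, and the Stein--Strichartz equivalence $\|S_\tau h\|_{L^q}\simeq\|D^\tau h\|_{L^q}$ for $0<\tau<1$, $1<q<\infty$, combine exactly as you describe. The case $r_1=r_2=1$ is indeed just the product rule for $\Delta$ together with the $L^q$-boundedness of Riesz transforms, which gives $\|\nabla f\|_{L^q}\simeq\|D^1 f\|_{L^q}$.

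The only place your sketch is thin is the remaining boundary cases $r_i\in\{0,1\}$ with $r<2$: the square function $S_\tau$ degenerates at $\tau=0$, and at $\tau=1$ the equivalence with $\|\nabla h\|_{L^q}$ is a separate (standard) Littlewood--Paley fact. Your suggestion to reach these endpoints by complex interpolation in the $r_1$ parameter is viable, but it requires setting up an analytic family of bilinear operators and checking the growth bounds; alternatively, the $r_1=0$ endpoint follows directly from the Kato--Ponce commutator estimate for $[D^r,f]g$. Modulo this routine endpoint bookkeeping, the proof is complete and is more elementary than the paraproduct/Coifman--Meyer machinery typically deployed for higher-order fractional Leibniz rules as in \cite{FGO}.
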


By a density argument the statement holds for $f,g \in H^S(\mathbb{R}^d).$
We choose
$f = \theta_\rho,$ $g=\varphi_n-\varphi$   and $r_1=r_2 = r/2$ and therefore we aim to use  \eqref{eq.smth1} and prove that
\begin{equation}\label{eq.tre1}
\begin{aligned}
 & \|[\theta_\rho, D^{r}]( \varphi_n-\varphi)\| _{L^{2}(\R^d)}  \leq \\ & \leq O(\rho^{-r}) \|\varphi_n-\varphi\|_{L^{2}(\R^d)} + O(\rho^{-r/4})\| \varphi_n-\varphi\| _{H^{r}(\R^d)}.
  \end{aligned}
\end{equation}

Indeed from the Proposition \ref{Corollary:1.1} we have
$$ \|[\theta_\rho, D^{r}]( \varphi_n-\varphi)\| _{L^{2}(\R^d)} \lesssim  \underbrace{\|D^{r} \theta_\rho\| _{L^{\infty}(\R^d)}\| \varphi_n-\varphi\| _{L^{2}(\R^d)}}_{= O(\rho^{-r})}+ $$
$$ + \| D^{r/2} \theta_\rho\| _{L^{p_1}(\R^d)} \| D^{r/2}( \varphi_n-\varphi))\| _{L^{p_2}(\R^d)}.$$
It is easy to check the estimate
$$ \| D^{r/2} \theta_\rho\| _{L^{p_1}(\R^d)}  = O(\rho^{-r/4}),$$
as $\rho \to \infty,$ and this is obviously fulfilled if
$\frac{d}{p_1} < \frac{r}{4}$. To control
$ \| D^{r/2}( \varphi_n-\varphi))\| _{L^{p_2}(\R^d)} $
we use Sobolev inequality
$$ \| D^{r/2}( \varphi_n-\varphi))\| _{L^{p_2}(\R^d)} \lesssim  \| \varphi_n-\varphi\| _{H^r(\R^d)} $$
so we need
$$ \frac{1}{p_2} > \frac{1}{2}- \frac{r-r/2}{d}.$$
Summing up we have the following restrictions for $1/p_1, 1/p_2$
\begin{equation}\label{eq.sim1}
\begin{aligned}
&\frac{1}{p_1}+\frac{1}{p_2} = \frac{1}{2}\\
&\frac{1}{p_1} < \frac{r}{4d}, \ \frac{1}{p_2} > \frac{1}{2}- \frac{r-r/2}{d}.
\end{aligned}
\end{equation}
Choosing
$ p_2= 2+ \kappa, p_1= 2(2+\kappa)/\kappa$ with $\kappa >0$ sufficiently small we see that \eqref{eq.sim1} is nonempty.
Now notice that
\begin{equation}\label{eq.quellocheserv}  \|\theta_{\rho} D^{r}( \varphi_n-\varphi)\| _{L^{2}(\R^d)}\leq   \|D^{r}( \theta_{\rho}(\varphi_n-\varphi)\| _{L^{2}(\R^d)}+ \|[\theta_\rho, D^{r}]( \varphi_n-\varphi)\| _{L^{2}(\R^d)}
\end{equation}

and  we conclude that \eqref{eq.mestr01} is true for $0 \leq r<\min(2, S)$ thanks to  \eqref{eq.smth1} and \eqref{eq.tre1}.

Now we consider the case
$2 \leq r<S.$ We have
$ D^{r} = D^{r_1}(-\Delta)^\ell,$ where $\ell \geq 1$ is integer and $ 0 < r_1 < 2.$ Then the commutator relation
$$ [A,BC]= [A,B]C+B[A,C]$$ implies
$$ [\theta_\rho, D^{r}] = [\theta_\rho,  D^{r_1} ](-\Delta)^\ell+ D^{r_1}[\theta_\rho,(-\Delta)^\ell ].$$
In fact, we have the relation
$$ \theta_\rho D^{r} ( \varphi_n-\varphi) = [\theta_\rho,  D^{r_1} ] ( (-\Delta)^\ell(\varphi_n-\varphi)) +  D^{r_1}[\theta_\rho,(-\Delta)^\ell ]( \varphi_n-\varphi)$$
and we use \eqref{eq.tre1} so that
$$\|[\theta_\rho, D^{r_1}](-\Delta)^\ell( \varphi_n-\varphi)\| _{L^{2}(\R^d)} \leq $$ $$ \leq O(\rho^{-r_1}) \|(-\Delta)^\ell (\varphi_n-\varphi)\|_{L^{2}(\R^d)} + $$ $$ + O(\rho^{-r_1/4})\| D^{r_1+2\ell}( \varphi_n-\varphi)\| _{L^{2}(\R^d)} = o(1)$$
for $\rho \to \infty.$

The term $$ D^{r_1}[\theta_\rho,(-\Delta)^\ell ]( \varphi_n-\varphi)$$ can be evaluated pointwise  via the classical Leibnitz rule and then via the fractional Leibnitz rule  as follows
$$  \|D^{r_1}[\theta_\rho,(-\Delta)^\ell ]( \varphi_n-\varphi)\|_{L^{2}(\R^d)} \lesssim $$ $$ \lesssim \sum_{1 \leq |\alpha|, |\alpha| +|\beta|=2\ell}  \|D^{r_1}(\partial_x^\alpha \theta_\rho) \partial_x^\beta ( \varphi_n-\varphi)\|_{L^{2}(\R^d)} \lesssim O(\rho^{-1}) \|\varphi_n-\varphi\|_{H^r(\mathbb{R}^d)}.$$

Summing up, we conclude that \eqref{eq.mestr01} holds in case $r \in [0,S).$

To conclude that the embedding is compact it remains to show that also $\|D^{r}( \varphi_n-\varphi)\| _{L^{2}(B_{\rho}^c)}^2\leq \epsilon$.
To this purpose we first use the pointwise decay in terms of  homogeneous Sobolev norm, see \cite{CO}. Given $r$ there exists $0<\delta<\frac{d-1}{2}$ with $r+\frac{1}{2}+\delta<S$ such that
\begin{equation}\label{eq:decaycc}
|D^{r}( \varphi_n-\varphi)(x)|\leq \frac{C}{|x|^{\gamma}}||\varphi_n-\varphi||_{\dot H^{r+\frac{1}{2}+\delta}(\R^d)}\lesssim  \frac{C}{|x|^{\gamma}}
\end{equation}
with $\gamma=\frac{d-1}{2}-\delta$.
Secondly we use  that $p_0<2$, i.e. that $p=2$ is non endpoint. By Theorem \ref{thm:super} there exists $\delta_0>0$ sufficiently small such that $D^{r} \varphi_n$ is uniformly bounded  in $L^{2-\delta_0}(\R^d)$ and the same holds hence for   $D^{r} \varphi$  and $D^{r}( \varphi_n-\varphi)$. As a consequence we have
$$\|D^{r}( \varphi_n-\varphi)\| _{L^{2}(B_{\rho}^c)}^2=\int_{B_{\rho}^c} |D^{r}( \varphi_n-\varphi)|^{\delta_0}|D^{r}( \varphi_n-\varphi)|^{2-\delta_0}dx\leq $$ $$ \leq \frac{C}{\rho^{\gamma}}^{\delta_0}\|D^{r}( \varphi_n-\varphi)\| _{L^{2-\delta_0}(B_{\rho}^c)}^{2-\delta}$$
with
$$\|D^{r}( \varphi_n-\varphi)\| _{L^{2-\delta_0}(B_{\rho}^c)}\leq \|D^{r}( \varphi_n-\varphi)\| _{L^{2-\delta_0}(\R^d)}=O(1).$$
This proves that $\|D^{r}( \varphi_n-\varphi)\| _{L^{2}(B_{\rho})}^2\lesssim \epsilon$ and hence that the embedding is compact.

\section{Proof of Theorem \ref{thm:main3}}
For easier reference we state the following.

\begin{lem}[\emph{pqr} Lemma \cite{FLL}]\label{Lieb}
Let $1\leq p<q<r\leq\infty$ and let $\alpha, \beta, \gamma>0$. Then there are constants $\eta,c>0$ such that for any measurable function $f\in L^p(X)\cap L^r(X)$, $X$ a measure space, with
\begin{equation*}
\|f\|_{L^p}^p\leq \alpha, \quad
\|f\|_{L^q}^q\geq \beta, \quad
\|f\|_{L^r}^r\leq \gamma, \quad
\end{equation*}
one has (with $|\cdot|$ denoting the underlying measure on $X$)
\begin{equation}\label{statement}
\left|\{ x \in X :\ |f(x)|>\eta \}\right| \geq c \,.
\end{equation}
\end{lem}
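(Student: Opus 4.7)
The plan is to run the classical three-way split of the integral $\int |f|^q\, dx$ into regions $\{|f|\leq \eta\}$, $\{\eta<|f|\leq M\}$, and $\{|f|>M\}$, choose $\eta$ small and $M$ large (depending only on $\alpha, \beta, \gamma, p, q, r$) so that the first and third contributions absorb at most one third each of the lower bound $\beta$, and then read off the measure bound from the middle piece.

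Concretely, on $\{|f|\leq \eta\}$ I would estimate
\begin{equation*}
\int_{\{|f|\leq\eta\}} |f|^q \, dx \;=\; \int_{\{|f|\leq\eta\}} |f|^{q-p} |f|^{p} \, dx \;\leq\; \eta^{q-p} \|f\|_{L^p}^p \;\leq\; \eta^{q-p}\alpha,
\end{equation*}
using $q>p$. On $\{|f|>M\}$, assuming first that $r<\infty$, the fact that $q<r$ gives
\begin{equation*}
\int_{\{|f|>M\}} |f|^q \, dx \;=\; \int_{\{|f|>M\}} |f|^{q-r} |f|^{r} \, dx \;\leq\; M^{q-r} \|f\|_{L^r}^r \;\leq\; M^{q-r}\gamma.
\end{equation*}
On the middle annulus $\{\eta<|f|\leq M\}$ one has the trivial pointwise bound $|f|^q\leq M^q$, hence
\begin{equation*}
\int_{\{\eta<|f|\leq M\}} |f|^q \, dx \;\leq\; M^q\, \bigl|\{x\in X:\ |f(x)|>\eta\}\bigr|.
\end{equation*}
Now I pick $\eta>0$ so small that $\eta^{q-p}\alpha \leq \beta/3$ and $M$ so large that $M^{q-r}\gamma \leq \beta/3$; this is possible because $q-p>0$ and $q-r<0$. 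Adding the three contributions and using the hypothesis $\|f\|_{L^q}^q \geq \beta$ yields
\begin{equation*}
\beta \;\leq\; \tfrac{\beta}{3} + M^q\,\bigl|\{|f|>\eta\}\bigr| + \tfrac{\beta}{3},
\end{equation*}
so that $\bigl|\{|f|>\eta\}\bigr| \geq \beta/(3M^q)=:c$.

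The only mild subtlety is the endpoint $r=\infty$: there the hypothesis reads $\|f\|_{L^\infty}\leq \gamma$, so the set $\{|f|>M\}$ is empty once $M=\gamma$, and the third term disappears entirely; one then chooses $\eta$ so that $\eta^{q-p}\alpha\leq \beta/2$ and obtains $c=\beta/(2\gamma^q)$. I do not expect a real obstacle here — the argument is entirely elementary — the only thing to watch is that the constants $\eta$ and $c$ depend only on the data $(p,q,r,\alpha,\beta,\gamma)$, which is manifest from the explicit choices above.
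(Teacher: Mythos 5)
Your argument is correct and complete: the three-region splitting of $\int_X |f|^q$, with $\eta$ chosen via the $L^p$ bound and $M$ via the $L^r$ bound (or $M=\gamma$ when $r=\infty$), is exactly the standard proof of the $pqr$ lemma. The paper itself gives no proof but simply cites Fr\"ohlich--Lieb--Loss, and your argument is the one found there, with explicit constants $\eta$ and $c=\beta/(3M^q)$ depending only on $(p,q,r,\alpha,\beta,\gamma)$ as required.
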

\begin{lem}[Compactness up to translations in $\dot H^{s}$  \cite{BFV}]\label{LiebIntro}
Let $s>0$, $1<p<\infty$ and $u_n\in \dot H^s(\R^d)\cap L^p(\R^d)$ be a sequence with
\begin{equation}\label{hyp1}
\sup_n \left( \|u_n\|_{\dot H^{s}(\R^d)} +\|u_n\|_{L^p(\R^d)} \right) <\infty
\end{equation}
and, for some $\eta>0$, (with $|\cdot |$ denoting Lebesgue measure)
\begin{equation}\label{hyp2}
\inf_n \left|\{ |u_n|>\eta \}\right|>0 \,.
\end{equation}
Then there is a sequence $(x_n)\subset\R^d$ such that a subsequence of $u_n(\cdot+ x_n)$ has a weak limit $u\not\equiv 0$ in $\dot H^s(\R^d)\cap L^p(\R^d)$.
\end{lem}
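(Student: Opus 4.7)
The plan is a standard concentration-compactness non-vanishing argument: the level-set hypothesis \eqref{hyp2} prevents the sequence from vanishing, so after translation a non-trivial weak limit must survive.

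First I would extract from the hypotheses an intermediate exponent with uniform two-sided control. Set $A_n := \{|u_n|>\eta\}$. From \eqref{hyp1} and Chebyshev, $|A_n|\leq \eta^{-p}\|u_n\|_{L^p}^p \leq C$, while \eqref{hyp2} gives $|A_n|\geq c>0$. Hence for every $r \in [1,\infty)$ for which $u_n \in L^r$,
\[ \|u_n\|_{L^r}^r \geq \int_{A_n} |u_n|^r\,dx \geq \eta^r |A_n| \geq \eta^r c, \]
so the $L^r$ norm is uniformly bounded below. In the Sobolev range $s<d/2$ the embedding $\dot H^s(\R^d) \hookrightarrow L^{2d/(d-2s)}(\R^d)$ combined with the $L^p$ bound yields, by interpolation, a uniform upper bound on $\|u_n\|_{L^r}$ for every $r$ between $\min(p,2d/(d-2s))$ and $\max(p,2d/(d-2s))$; the case $s\geq d/2$ is analogous with $2d/(d-2s)$ replaced by $+\infty$ via local embeddings. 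I would fix one such exponent $r$ strictly inside this interpolation interval.

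Next I would apply the classical Lions non-vanishing lemma for bounded sequences in $\dot H^s$: such a sequence cannot satisfy $\sup_{y\in\R^d}\|u_n\|_{L^r(B(y,1))}\to 0$ unless $\|u_n\|_{L^r}\to 0$, provided $r$ lies inside the Sobolev interpolation range. The lower bound $\|u_n\|_{L^r}\geq \eta c^{1/r}$ of the previous step rules out this conclusion, so, along a subsequence, there exist $\delta>0$ and $x_n \in \R^d$ with
\[ \int_{B(x_n,1)} |u_n|^r\,dx \geq \delta. \]
Setting $v_n(x) := u_n(x + x_n)$, translation invariance preserves \eqref{hyp1} and gives $\int_{B(0,1)} |v_n|^r\,dx \geq \delta$.

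Reflexivity of $\dot H^s(\R^d)\cap L^p(\R^d)$ now allows extraction of a further subsequence with $v_n \rightharpoonup v$ in both spaces. The uniform bound on $\|v_n\|_{\dot H^s}+\|v_n\|_{L^p}$, combined with local $L^2$ control (either from the $L^p$ bound directly when $p\geq 2$, or from the Sobolev embedding when $p<2$), gives a uniform $H^s(B(0,2))$ bound. Rellich--Kondrachov then furnishes a compact embedding into $L^{r_0}(B(0,1))$ for some $r_0>1$; extracting a further subsequence, $v_n \to v$ pointwise a.e.\ on $B(0,1)$. Combined with the uniform $L^{r'}$ bound for some $r'>r$, Vitali's convergence theorem yields $\int_{B(0,1)} |v|^r\,dx \geq \delta$, so $v \not\equiv 0$. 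The main obstacle I expect is a uniform verification of the Lions non-vanishing step across the various $(s,p,d)$ regimes, particularly when $p$ lies at or outside the natural Sobolev interpolation endpoints; in those exceptional cases the step is carried out directly on the level set $A_n$ via a covering argument into unit cubes together with Chebyshev, exploiting that $|A_n|$ is bounded away from $0$ and from $+\infty$.
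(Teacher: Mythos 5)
First, a point of comparison: the paper does not prove this lemma at all --- it is quoted verbatim from \cite{BFV} and used as a black box --- so there is no in-paper proof to measure your argument against. Your proposal must therefore stand on its own, and while its skeleton is the standard Lieb/Lions translation-compactness scheme (level-set lower bound $\Rightarrow$ non-vanishing $\Rightarrow$ translate $\Rightarrow$ weak limit plus local compactness $\Rightarrow$ nontrivial limit), and the peripheral steps (Chebyshev, interpolation of $L^r$ norms, Rellich plus Vitali on the unit ball) are fine, the decisive step is not actually carried out.

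The gap is the non-vanishing step. The ``classical Lions non-vanishing lemma'' you invoke is a statement about sequences bounded in $H^1$ (or $H^s$, i.e.\ with a global $L^2$ bound). Here the sequence is bounded only in $\dot H^s\cap L^p$ for arbitrary $s>0$ and $1<p<\infty$, and establishing non-vanishing in that setting \emph{is} essentially the entire content of the lemma being proved; it cannot be cited as classical. Your fallback --- ``a covering argument into unit cubes together with Chebyshev, exploiting that $|A_n|$ is bounded away from $0$ and from $+\infty$'' --- does not close either: writing $m_j=|A_n\cap Q_j|\in[0,1]$, the information $c\le\sum_j m_j\le C$ is compatible with $\sup_j m_j\to 0$ (take $N$ cubes each carrying mass $c/N$). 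Lieb's argument needs the additional bound $\sum_j m_j^{\alpha}\le C$ for some $\alpha<1$, which comes from a \emph{local} Sobolev inequality on each cube whose right-hand sides are summable over the tiling, i.e.\ from a localization of the form $\sum_j\|u\|_{H^s(Q_j)}^2\lesssim\|u\|_{\dot H^s}^2+\|u\|_{L^2}^2$. That requires (i) a global $L^2$ bound, which is unavailable from $\dot H^s\cap L^p$ when $p>2$ and $2$ lies outside the interpolation range, and (ii) the subadditive localization of the $\dot H^s$ seminorm over cubes, which is immediate only for $0<s<1$ via the Gagliardo seminorm and needs a separate argument (Littlewood--Paley or iterated differences) for non-integer $s\ge 1$. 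The same issue resurfaces in your Rellich step, where passing from a $\dot H^s(\R^d)\cap L^2_{loc}$ bound to an $H^{s}(B(0,2))$ bound requires a cutoff/commutator estimate of the type the paper only develops later (Proposition 4.1). None of this is fatal --- it is precisely the technical work supplied in \cite{BFV} --- but as written the proposal defers the heart of the proof to a lemma that does not exist in the stated generality.
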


The strategy to prove  Theorem \ref{thm:main3} follows the one developed in \cite{BFV}. First we aim to show that the maximum of
$$W(\varphi)=\frac{\|D^r \varphi\|_{L^2(\R^d)}}{ \|\varphi\|_{L^2(\R^d)}^{1-\frac{r}{S}} \, \|D^{S} \varphi\|_{L^2(\R^d)}^{\frac{r}{S}}}  \ \ \ \varphi \in  H^{S,r}_{rad,+}(\R^d),$$
is achieved in   $H^{S,r}_{rad,+}(\R^d)$ . Let us consider a maximizing sequence $\varphi_n$. Since $W$ is invariant under
homogeneity $\varphi(x) \mapsto \lambda \varphi(x)$ and scaling
$\varphi \mapsto \varphi(\lambda x)$ for any $\lambda>0$, we can choose a maximizing sequence $\varphi_n$ such that
\begin{equation}\label{0.0}
\|D^r \varphi_n\|_{L^2(\R^d)}= C_{rad,+}(r,S,2, d)+o(1)
\end{equation}
and
\begin{equation}\label{0.1}
\|\varphi_n\|_{L^2(\R^d)}=\|D^{S} \varphi_n\|_{L^2(\R^d)}=1 \,.
\end{equation}
The key observation is that, since we are looking at a non-endpoint case (i.e. $p_0<2$), there exists $\epsilon>0$ such that from inequality \eqref{GagliNiren5}  we infer that
\begin{equation}\label{inequ}
\sup_n \max\left\{\|D^r \varphi_n\|_{L^{2-\epsilon}(\R^d)},
\|D^r \varphi_n\|_{L^{2+\epsilon}(\R^d)}\right\}<\infty \,.
\end{equation}
The $pqr$-lemma (Lemma \ref{Lieb}) now implies that
\begin{equation}\label{superlevel}
\inf_n \left|\{ |D^r \varphi_n|>\eta \}\right| >0.
\end{equation}
Next, we apply the compactness modulo translations lemma (Lemma \ref{LiebIntro}) to the sequence $(D^r \varphi_n)$. This sequence is bounded in $\dot H^{S-r}$ by \eqref{0.1}, and \eqref{hyp1} and \eqref{hyp2} are satisfied by \eqref{0.0} and \eqref{superlevel}. Thus possibly after passing to a subsequence, we have $D^r \varphi_n \rightharpoonup \psi\not\equiv 0$ in $ H^{S-r}(\R^d)$. By the fact the embedding is compact we deduce that $\varphi_n(x) \rightarrow  \psi\not\equiv 0$ in $ \dot H^{r}(\R^d)$ and hence $ \psi$ is a maximizer for $W$.\\
Now we conclude showing that $C_{rad,+}(r,S,2, d)<1$.\\
Indeed if the best constant is  $C_{rad,+}(r,S,2, d)=1$, the maximizer $\psi$ achieves the equality in H\"older's inequality, which means
\begin{equation}\label{bellisssimo}
\begin{aligned}
& \int_{\R^d} |\xi|^{2r}|\widehat \psi|^2d\xi=\int_{\R^d} |\widehat \psi|^{2-\frac{2r}{S}} |\xi|^{2r}|\widehat \psi|^{\frac{2r}{S}}d\xi= \\ & =\left(\int_{\R^d} |\widehat \psi|^2d\xi\right)^{1-\frac{r}{S}}\left(\int_{\R^d} |\xi|^{2S}|\widehat \psi|^2d\xi\right)^{\frac{r}{S}},
\end{aligned}
\end{equation}
where we used as conjugated exponents $\frac{S}{S-r}$ and $\frac{S}{r}$. Now we recall that if $f\in L^p(\R^d)$ and $g\in L^q(\R^d)$ with $p$ and $q$ conjugated exponents achieve the equality
in H\"older's inequality then $|f|^p$ and $|g|^q$ shall be linearly dependent, i.e.  for a suitable $\mu,  |f|^p=\mu |g|^q$ almost everywhere.
Therefore, calling $f=|\widehat \psi|^{2-\frac{2r}{S}}$ and $g= |\xi|^{2r}|\widehat \psi|^{\frac{2r}{S}}$,  the maximizer $\psi$ should satisfy $ |\widehat \psi|^{2}=\mu |\xi|^{2S}|\widehat \psi|^2 $ for a suitable $\mu$ which drives to the contradiction $\widehat \psi=0.$

\section{Appendix.}

The statement of Lemma \ref{l.dN1} can be found in \cite{DL}. Somehow, due to the fact that in the original paper the proof of Lemma \ref{l.dN1} is not easy readable, being a part of a more general statement, we give an alternative short proof.
\begin{proof}[Proof of Lemma \ref{l.dN1}]
We divide the integration domain in two subdomains:
$$ \Omega= \{|x| < |y|/2 \} \cup \{|x| > 2|y|\} $$
and its complementary set $ \Omega^c.$ In $\Omega$ we use
$$ |x-y| \geq \frac{\max(|x|,|y|)}{2} $$ and via
$$ (1+ |x-y| ^2)^{(d-1)/2} \gtrsim (1+ (\max(|x|,|y|)) ^2)^{(d-1)/2} \geq $$ $$ \geq  \max(|x|,|y|))^{(d-1)}\gtrsim |x|^{(d-1+b)} |y|^{-b} $$
with $d-1+b>0, -b>0$ we deduce
$$ \frac{1}{(1+|x-y|^2)^{\gamma/2}} = \frac{1}{(1+|x-y|^2)^{(d-1)/2}} \frac{1}{(1+|x-y|^2)^{(\gamma-d+1)/2}}$$ $$  \lesssim \frac{1}{|x|^{d-1+b}} |y|^b \frac{1}{(1+|x-y|^2)^{(\gamma-d+1)/2}} \leq  \frac{1}{|x|^{d-1+b}} |y|^b .$$
These estimates imply
\begin{equation}\label{eq.dN2}
   \left| \int_{\Omega} \frac{f(y) dy}{(1+|x-y|^2)^{\gamma/2}} \right| \lesssim \frac{1}{|x|^{d-1+b}} \left\| |y|^b f\right\|_{L^1(\mathbb{R}^d)}.
\end{equation}
For the complementary domain $\Omega^c$ we use spherical coordinates $x=r\theta, y =\rho \omega,$ where $r=|x|, \rho=|y|.$ We have to estimate
$$ \int_{\Omega^c} \frac{f(y) dy}{(1+|x-y|^2)^{\gamma/2}} = \int_{r/2}^{2r} K(r,\rho) f(\rho) \rho^{d-1} d\rho, $$
where
\begin{equation}\label{eq.ke2}
     K(r,\rho) = K_{\theta,\gamma}(r,\rho) = \int_{\mathbb{S}^{d-1}} (1+|r\theta-\rho\omega|^2)^{-\gamma/2} d \omega.
\end{equation}
To get the desired estimate
\begin{equation}\label{eq.dN3}
   \left| \int_{\Omega^c} \frac{f(y) dy}{(1+|x-y|^2)^{\gamma/2}} \right| \lesssim \frac{1}{|x|^{d-1+b}} \left\| |y|^b f\right\|_{L^1(\mathbb{R}^d)}
\end{equation}
it is sufficient to check the pointwise estimate
\begin{equation}\label{eq.kes1}
   K(r,\rho) \lesssim  r^{-(d-1+b)} \rho^{b}  \sim r^{-(d-1)} \ \ \mbox{for  $ r/2 \leq \rho \leq 2r$.}
\end{equation}

To deduce this pointwise estimate of the kernel $K$ we note first that $K$ does not depend on $\theta$ so we can take
$\theta=e_d=(0,\cdots,0,1)$ and $\omega = ( \omega^\prime \sin \varphi, \cos \varphi),$ $\omega^\prime \in \mathbb{S}^{d-2}$ and get
$$K(r,\rho) = c\int_0^\pi \frac{\sin ^{d-2} \varphi d \varphi}{(1+r^2+\rho^2-2r\rho\cos \varphi)^{\gamma/2}}.$$
Using the relation
$$ (1+r^2+\rho^2-2r\rho\cos \varphi) = 1+(r-\rho)^2 + r\rho \sin^2 (\varphi/2),$$
we can use the
$$ (1+r^2+\rho^2-2r\rho\cos \varphi) \gtrsim r\rho \sim r^2 $$
when $\rho \sim r$ and  $\varphi$ is not close to $0,$ say $\varphi \in (\pi/4, \pi).$
Then we get
$$ \int_{\pi/4}^\pi \frac{\sin ^{d-2} \varphi d \varphi}{(1+r^2+\rho^2-2r\rho\cos \varphi)^{\gamma/2}} \lesssim \int_{\pi/4}^\pi r^{-\gamma} d\varphi \lesssim r^{-\gamma}  \leq r^{-d+1} .$$

For $\varphi$ close to $0$, say $\varphi \leq \pi/4$ we use
$$ \frac{\sin ^{d-2} \varphi }{(1+r^2+\rho^2-2r\rho\cos \varphi)^{\gamma/2}} \lesssim \frac{\varphi^{d-2}}{(1+r\rho \varphi^2)^{\gamma/2}}.$$
In this way, making the change of variable $r\varphi=\eta$ we get
$$ \int_0^{\pi/4}  \frac{\varphi^{d-2} d\varphi}{(1+r\rho \varphi^2)^{\gamma/2}} \lesssim \int_0^{\infty}  \frac{\varphi^{d-2} d\varphi}{(1+r^2 \varphi^2)^{\gamma/2}}  \leq $$ $$ \leq  r^{-d+1}  \int_0^{\infty}  \frac{\eta^{d-2} d\eta}{(1+ \eta^2)^{\gamma/2}} \lesssim r^{-d+1}   $$
in view of $\rho \sim r$ and $\gamma > d-1.$
Taking together the above estimates of the integrals over $(0,\pi/4)$
 and $(\pi/4,\pi)$, we arrive at \eqref{eq.kes1}.

 This completes the proof of the Lemma.

 \end{proof}

\end{document}